\numberwithin{equation}{section}
\newfont{\aaa}{cmb10 at 19pt}
\newfont{\bbb}{cmb10 at 14pt}
\newtheorem{theorem}{Theorem}[section]
\newtheorem{corollary}[theorem]{Corollary}
\newtheorem{lemma}[theorem]{Lemma}
\newtheorem{claim}[theorem]{Claim}
\newtheorem{problem}[theorem]{Problem}
\newtheorem{conjecture}[theorem]{Conjecture}
\newcommand{\Rmnum}[1]{\expandafter\@slowromancap\romannumeral #1@}
\newcommand{\beq}{\begin{equation}}
\newcommand{\eeq}{\end{equation}}
\newcommand{\bey}{\begin{eqnarray}}
\newcommand{\eey}{\end{eqnarray}}
\newcommand{\beyy}{\begin{eqnarray*}}
\newcommand{\eeyy}{\end{eqnarray*}}
\begin{document}

\title{A generalization of the Hamiltonian cycle in dense digraphs \thanks{Zhilan Wang and Jin Yan are supported by NNSF of China (No.12071260).}}

\author{Jie Zhang\textsuperscript{1}, Zhilan Wang\textsuperscript{2}\thanks{Corresponding author: E-mail address: 202220319@mail.sdu.edu.cn.}, Jin Yan\textsuperscript{2}\unskip\\[2mm]
\textsuperscript{1} {\footnotesize Xi'an Research Institute of High-tech Hongqing Town, Xi'an, Shanxi 710025, China}
\\
\textsuperscript{2} {\footnotesize School of Mathematics, Shandong University, Jinan 250100, China}}
\date{}
\maketitle

%

\begin{abstract}
Let $D$ be a digraph and $C$ be a cycle in $D$. For any two vertices $x$ and $y$ in $D$, the distance from $x$ to $y$ is the minimum length of a path from $x$ to $y$. We denote the square of the cycle $C$ to be the graph whose vertex set is $V(C)$ and for distinct vertices $x$ and $y$ in $C$, there is an arc from $x$ to $y$ if and only if the distance from $x$ to $y$ in $C$ is at most $2$. The reverse square of the cycle $C$ is the digraph with the same vertex set as $C$, and the arc set $A(C)\cup \{yx: \mbox{the vertices}\ x, y\in V(C)\ \mbox{and the distance from $x$ to $y$ on $C$ is $2$}\}$. In this paper, we show that for any real number $\gamma>0$ there exists a constant $n_0=n_0(\gamma)$, such that every digraph on $n\geq n_0$ vertices with the minimum in- and out-degree at least $(2/3+\gamma)n$ contains the reverse square of a Hamiltonian cycle. Our result extends a result of Czygrinow, Kierstead and Molla.
\end{abstract}
\noindent{\bf Keywords:} Digraphs; minimum semi-degree; the square of a Hamiltonian cycle;\\ absorption method

\noindent{\bf Mathematics Subject Classifications:}\quad 05C20, 05C70, 05C07
\section{Introduction}
Hamiltonicity is one of the most central notions in graph theory, and it has been extensively studied by numerous researchers. 
One of the most notable examples here is Dirac's theorem \cite{Dirac} from $1952$, which states that a graph with $n\geq3$ vertices and minimum degree at least $n/2$ contains a Hamiltonian cycle, which is a cycle passing through every vertex of the graph exactly once. This result has an impact on extremal graph theory, leading to a wide of range of results usually known as Dirac-type results. With the development of sophisticated embedding techniques in recent decades, the Dirac problem is nowadays well-understood for a large class of graphs.

To clarify, the \emph{$k$-th power} of a directed path $P_l=v_0\cdots v_l$ refers to the directed graph $P_l^k$ with the same set of vertices as $P_l$, and in $P_l^k$, there exists an edge $v_iv_j$ if and only if $i<j\leq i+k$. Similarly, the $k$-th power of a directed cycle is defined in the same manner. A natural and more general property is to contain the $k$-th power of a Hamiltonian cycle. Extending Dirac's theorem, in $1962$ P\'{o}sa \cite{Posa} conjectured that every graph $G$ on $n$ vertices with the minimum degree at least $\frac{2n}{3}$ contains the square of a Hamiltonian cycle. Furthermore, Seymour \cite{Seymour} conjectured in $1974$ that for any integer $k$, the minimum degree bound for a graph to contain the $k$-th power of a Hamiltonian cycle is $\frac{kn}{k+1}$. After two decades and several papers on this question, Koml\'{o}s, S\'{a}rk\"{o}zy and Szmer\'{e}di \cite{Komlos} confirmed Seymour's conjecture.

\smallskip

Particularly, one can ask similar questions for digraphs, which often poses greater challenges. For the special digraph: an \emph{oriented digraph} $G$, which is an orientation of a simple graph, Thomassen \cite{Thomassen} posed the question of determining the \emph{minimum semi-degree $\delta^0(G)$}, which is the minimum of the minimum out-degree and the minimum in-degree of $G$, to guarantee the existence of a Hamiltonian cycle in $G$. This was answered by Keevash, K\"{u}hn and Osthus \cite{Keevash}, who showed that $\delta^0(G)\geq \frac{3n-4}{8}$ is sufficient to force a Hamiltonian cycle in a sufficient large oriented graph $G$. However, for the problem for the square of Hamiltonian cycles in the oriented graph $G$, it is not well resolved. Treglown \cite{Treglown} asserts that $\delta^0(G)\geq\frac{5n}{12}$ is necessary, which was subsequently improved by DeBiasio (personal communication). He stated that $\delta^0(G)\geq\frac{3n}{7}-1$ is needed by using a slightly unbalanced blow up of the Paley tournament on seven vertices. It would be intriguing to determine, even in an asymptotic sense, the optimal value of $\delta^0(G)$ that guarantees the existence of the square of a Hamiltonian cycle.

\smallskip

For general digraphs, Ghouila-Houri \cite{Ghouila} proved that any digraph $D$ on $n\geq2$ vertices with $\delta^0(D)\geq n/2$ has a Hamiltonian cycle. Unfortunately, no progress has been made on the $k$-th power of a Hamiltonian cycle problem of digraphs. In this paper, we give the following result. Let $D$ be a digraph and $C$ be a cycle in $D$. The \emph{reverse square} of the cycle $C$ is the digraph that has the same vertex set as $C$, and arc set to be the union of all arcs of $C$ and all arcs $yx$ satisfying the distance from $x$ to $y$ is $2$ on $C$.
\begin{theorem}\label{main}
For any real number $\gamma>0$ there exists a constant $n_0=n_0(\gamma)$ such that every digraph $D$ on $n\geq n_0$ vertices with $\delta^0 (D) \geq (2/3+\gamma)n$ contains the reverse square of a Hamiltonian cycle.
\end{theorem}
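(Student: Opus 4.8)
The plan is to run the absorption method. Call a directed path $P=w_1w_2\cdots w_\ell$ in $D$, taken together with all arcs $w_{i+2}w_i$, a \emph{reverse-square path} (RSP); its \emph{head} is the ordered pair $(w_1,w_2)$ and its \emph{tail} is $(w_{\ell-1},w_\ell)$. An easy check shows that two RSPs $P$ (tail $(u,v)$) and $Q$ (head $(x,y)$) concatenate into one RSP exactly when the arcs $vx$, $xu$, $yv$ are all present, and that a cyclic RSP spanning $V(D)$ is precisely the reverse square of a Hamiltonian cycle. The engine of the argument is the elementary remark that $\delta^0(D)\ge(2/3+\gamma)n$ gives $d^+(v)+d^-(v)\ge(4/3+2\gamma)n$, so every vertex has at least $(1/3+2\gamma)n$ \emph{digon-neighbours}, and the common neighbourhood of any at most three vertices (mixing in- and out-directions) has size $\Omega(\gamma n)$. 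First I would prove a connecting lemma: if $R\subseteq V(D)$ is a reservoir of size $\varepsilon n$ and $P,Q$ are RSPs avoiding $R$, there is a bounded-length RSP with interior in $R$ joining the tail of $P$ to the head of $Q$. This is greedy extension: each new vertex is pinned to an intersection of two neighbourhoods of recently placed vertices, hence to a linear set, and the last two vertices carry one extra constraint from $Q$, still leaving a linear set, so a random $R$ works with room to spare.

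\medskip

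Next I would build the absorbing structure. For each $v\in V(D)$ I would exhibit $\Omega_\gamma(n^4)$ \emph{$v$-gadgets}: constant-size vertex sets with a marked slot such that the set spans a ``base'' RSP on its own vertices and an ``absorbing'' RSP on its vertices plus $v$, both with the same head and tail. Splicing one vertex into a slot $s_{t-1}s_t\mid s_{t+1}s_{t+2}$ forces exactly four adjacencies of $v$ (it must become a $2$-in/$2$-out vertex of the new RSP) and forces $\{s_t,s_{t+1}\}$ to be a digon; the number of valid cores is then about $n\cdot(3\gamma n)^3$, because every set in sight is an intersection of two neighbourhoods, or a digon-neighbourhood intersected with one neighbourhood --- this is exactly where the abundance of digons is used. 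Taking a random sub-family of all the gadgets, discarding the few that overlap one another or $R$, and chaining the survivors with the connecting lemma yields an RSP $A$ with $|V(A)|\le\gamma n/2$, disjoint from $R$, whose head and tail we may arrange to bridge to each other, and such that for every $W\subseteq V(D)\setminus(V(A)\cup R)$ with $|W|\le\beta n$ the set $V(A)\cup W$ spans an RSP with the same head and tail as $A$ (route distinct $w$-slots to the $w\in W$ by Hall's theorem --- trivial, since every vertex keeps $\ge\beta n$ of its slots --- and absorb one vertex per slot). Since absorption is one vertex at a time, no divisibility hypothesis on $n$ is needed.

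\medskip

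Then I would cover the bulk of the vertices. On $D':=D\setminus(V(A)\cup R)$, still with $\delta^0(D')\ge(2/3+\gamma/2)n$, apply the directed Szemer\'edi Regularity Lemma; the reduced digraph $\mathcal R$ has $\delta^0(\mathcal R)\ge(2/3+\gamma/3)|\mathcal R|$ and hence an almost-perfect tiling by cyclic triangles of clusters (which is exactly the regime of the Czygrinow--Kierstead--Molla theorem; we use it in reduced-graph form with the standard divisibility adjustment). Since $\mathcal R$ has linear minimum semi-degree its ``triangle-link graph'' is connected, so these triangles can be threaded into a spanning walk whose consecutive triangles $\tau_i=(A_i,B_i,C_i)$, after choosing rotations, are joined by the reverse-square inter-block pattern $C_i\to A_{i+1}$, $A_{i+1}\to B_i$, $B_{i+1}\to C_i$; a directed Blow-up Lemma then embeds into $D'$ an RSP $P_0$ covering all but $\le\beta n/2$ vertices of $D'$. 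Now bridge $A$ and $P_0$ into one cyclic RSP through $R$ (taking $\varepsilon$ small enough that the unused part of $R$ together with the $\le\beta n/2$ leftover of $D'$ form a set $W$ with $|W|\le\beta n$), and absorb $W$ into the slots of $A$. The result is a cyclic RSP spanning $V(D)$, i.e.\ the reverse square of a Hamiltonian cycle.

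\medskip

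I expect the hard part to be this last step: arranging the cyclic-triangle tiling of $\mathcal R$ so that it threads consistently with the rigid inter-block pattern, and setting up a directed Blow-up Lemma that simultaneously respects the within-block cyclic triangles and the backward distance-two arcs of the reverse square. The absorber is the conceptual core, and its design must cope with the period-three rigidity of the reverse square --- one cannot splice a single vertex in at an arbitrary place --- but the degree hypothesis supplies exactly the resource (linearly many digons at every vertex) that makes the gadgets plentiful, and once that is granted the first two steps follow the by-now standard template.
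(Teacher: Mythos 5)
Your proposal follows the same high-level template as the paper's proof --- connecting lemma, plentiful $O(1)$-size absorbers assembled into an absorbing path, a random reservoir for the connections, a Regularity+Blow-up covering step, then splice and absorb --- so the skeleton matches. But two of the steps, as you sketch them, have genuine gaps.

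First, the connecting lemma cannot be done by the plain greedy extension you describe. When an RSP bridge $s_1\cdots s_m$ is required to end so as to concatenate with the head $(x,y)$ of $Q$, the final vertex $s_m$ must satisfy \emph{four} neighbourhood constraints simultaneously: $s_m\in N^+(s_{m-1})\cap N^-(s_{m-2})\cap N^-(x)\cap N^+(y)$ (and $s_{m-1}$ carries three, not two). With $\delta^0\ge(2/3+\gamma)n$ the inclusion--exclusion bound for a fourfold intersection is $(4\gamma-1/3)n$, which is negative for the small $\gamma$ we care about, so the last step of the greedy may have no valid choice. You under-count when you say ``the last two vertices carry one extra constraint.'' The paper's Connecting Lemma does not use naive greedy precisely for this reason: it grows a cascade of reachability sets $X_0,X_1,\ldots$ out of both ends and shows (Claim 3.2) that within $O(1/\gamma)$ rounds some $X_j$ must contain a \emph{heavy} vertex of in-degree $\ge(1/3+\gamma)n$ in the cascade; that surplus degree is exactly what rescues the intersection count when the two cascades are joined in the middle. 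Some such ``gain'' argument is needed and is absent from your sketch.

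Second, your path-covering step differs from the paper's and, as written, is not justified. You want to thread \emph{all} the cyclic triangles of the reduced digraph $\mathcal R$ into a single spanning walk with the rigid inter-block pattern $C_i\to A_{i+1}$, $A_{i+1}\to B_i$, $B_{i+1}\to C_i$, and then Blow-up one long RSP $P_0$. But connectivity of a ``triangle-link graph'' does not give a Hamiltonian ordering of the triangles; you need a Hamilton path, not a spanning walk (a walk revisiting a triangle would overdraw its clusters), and a Hamilton path in an auxiliary graph that simultaneously respects three directed adjacency constraints per consecutive pair, after choosing rotations, is exactly the kind of statement that needs an argument you have flagged but not given. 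The paper's Path-Covering Lemma deliberately avoids this: it uses the Blow-up Lemma only \emph{inside} each blown-up triangle, producing $t/3$ disjoint RSPs each winding around one triple of clusters, and then connects those paths (and the absorbing path) one after another through the reservoir by the Reservoir-Connecting Lemma. So the inter-triangle linkage lives in the original dense digraph $D$, where it is cheap, rather than in the reduced digraph, where it is delicate. Your overall architecture is right, but both of these links in the chain need to be replaced by something with actual slack.
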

The following result can be reduced from Theorem \ref{main}, which also is proved by Czygrinow, Kierstead and Molla \cite{Czygrinow}.
\begin{corollary}
For any real number $\gamma>0$ there exists a constant $n_0=n_0(\gamma)$ such that every digraph $D$ on $n\geq n_0$ vertices with $\delta^0 (D) \geq (2/3+\gamma)n$ contains $\lfloor \frac{|V(D)|}{3}\rfloor$ disjoint triangles.
\end{corollary}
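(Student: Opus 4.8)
The plan is to deduce the corollary immediately from Theorem~\ref{main}, so that the whole argument is essentially a structural observation about reverse squares. First I would apply Theorem~\ref{main} to $D$ (legitimate since $\delta^0(D)\geq(2/3+\gamma)n$ and $n\geq n_0(\gamma)$), obtaining a Hamiltonian cycle $C=x_1x_2\cdots x_nx_1$ such that $D$ contains the reverse square of $C$. The key point is that every three consecutive vertices of $C$ already span a directed triangle inside this reverse square: for each index $i$ the arcs $x_ix_{i+1}$ and $x_{i+1}x_{i+2}$ belong to $A(C)$, while the distance from $x_i$ to $x_{i+2}$ along $C$ equals $2$, so by the definition of the reverse square the arc $x_{i+2}x_i$ is also present; hence $x_i\to x_{i+1}\to x_{i+2}\to x_i$ is a directed triangle in $D$.

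Given this, I would simply carve the Hamiltonian cycle into blocks of three consecutive vertices. Write $n=3q+r$ with $r\in\{0,1,2\}$ and $q=\lfloor n/3\rfloor$, discard the last $r$ vertices $x_{n-r+1},\dots,x_n$, and for $j=0,1,\dots,q-1$ set $T_j=\{x_{3j+1},x_{3j+2},x_{3j+3}\}$. Since $3j+3\leq 3q\leq n$ for every such $j$, the three vertices of $T_j$ are genuinely consecutive on $C$, so by the observation above they span a directed triangle in $D$; and the blocks $T_0,\dots,T_{q-1}$ are pairwise disjoint by construction. This produces $q=\lfloor|V(D)|/3\rfloor$ vertex-disjoint triangles in $D$, as required.

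I do not expect any real obstacle here: all of the difficulty is contained in Theorem~\ref{main}, and the reduction uses only the defining property of the reverse square together with a trivial handling of the divisibility of $n$ by $3$ (discarding at most two leftover vertices). In this sense Theorem~\ref{main} strictly strengthens the triangle-packing result of Czygrinow, Kierstead and Molla~\cite{Czygrinow}.
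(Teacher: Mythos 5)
Your reduction is correct and is exactly the one the paper has in mind (the paper states the corollary "can be reduced from Theorem~\ref{main}" without writing it out). Each consecutive triple $x_i,x_{i+1},x_{i+2}$ on the reverse-square Hamiltonian cycle indeed carries the arcs $x_ix_{i+1}$, $x_{i+1}x_{i+2}$ (from $C$) and $x_{i+2}x_i$ (since $\mathrm{dist}_C(x_i,x_{i+2})=2$), so it is a cyclic triangle, and chopping the cycle into $\lfloor n/3\rfloor$ disjoint consecutive triples finishes the argument.
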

Also, due to the difficulty of these problems in general, it is natural to ask what happens in \emph{tournaments}, which are orientations of complete graphs. It is a well-known result that every tournament with the minimum semi-degree at least $\frac{n-2}{4}$ has a Hamiltonian cycle, and this lower bound is optimal. Further, Bollob\'{a}s and H\"{a}ggkvist \cite{Bollobas} proved that for every $\varepsilon>0$ and $k$, there exists an $n_0=n_0(\varepsilon, k)$ such that every tournament $T$ on $n\geq n_0$ vertices with $\delta^0(T)\geq \frac{n}{4}+\varepsilon n$ contains the $k$-th power of a Hamiltonian cycle. Lately, Dragani\'{c}, Correia and Sudakov \cite{Draganic} refined the additive error in the degree condition, who proved that there exists a constant $c=c(k)>0$ such that any tournament $T$ on $n$ vertices with $\delta^0(T)\geq\frac{n}{4}+cn^{1-1/\lceil k/2\rceil}$ contains the $k$-th power of a Hamiltonian cycle. In particular, they also showed that a constant error term is enough for the tournament to
contain the square of a Hamiltonian cycle.

\smallskip

\textbf{Organization.} The rest of the paper is organised as follows. Our approach of the proof of Theorem \ref{main} uses a hybrid of the Regularity-Blow-up method and the Connecting-Absorbing method. The absorption method that was introduced by R\"{o}dl, Ruci\'{n}ski, and Szemer\'{e}di \cite{Rodl}. However, we need to adapt these ideas to the reverse square of Hamiltonian cycles in digraphs instead of tight cycles in hypergraphs. In Section $2$, we begin by presenting relevant notations and some useful results. Moving on to Section $3$,
we first introduce the main tools, namely Connecting Lemma, Absorbing Lemma and Path-Covering Lemma. Then we give the proof of Theorem \ref{main}. Finally, Section $4$ contains some concluding remarks to wrap up the paper.

\section{Preparations for Theorem \ref{main}}
\subsection{Definitions and notations}
For notations not defined in this paper, we refer the readers to \cite{Bang-Jensen3}. We denote by $G(A, B)$ a bipartite graph $G$ with vertex classes $A$ and $B$, and $e_G(A, B)$ is the number of edges between $A$ and $B$.
Let $D=(V, A)$ be a digraph. The cardinality of a vertex set $X\subseteq V$ is denoted by $|X|$, and we call $X$ to be a \emph{$i$-set} if $|X|=i$. The subdigraph of $D$ induced by $X$ is denoted as $D[X]$. Let $D-X=D[V\setminus X]$. Let $Y$ be another subset of $V$ which is disjoint with $X$. We define the arcs from $X$ to $Y$ to be \emph{$X-Y$ edges}. The \emph{out-neighbourhood} (resp., \emph{in-neighbourhood}) of a vertex $v$ in $D$ is defined as $N^{+}(v)=\{u: vu\in A\}$ (resp., $N^{-}(v)=\{w: wv\in A\}$). The \emph{out-degree} (resp., \emph{in-degree}) of $v$ in $D$, which is denoted by $d^+(v)$ (resp. $d^-(v)$), is the cardinality of $N^{+}(v)$ (resp., $N^{-}(v)$), that is, $d^{+}(v)=|N^{+}(v)|$ (resp., $d^{-}(v)=|N^{-}(v)|$). The \emph{minimum out-degree} $\delta^+(D)=\min\{d^{+}(v): v\in V\}$ and the \emph{minimum in-degree} $\delta^-(D)=\min\{d^{-}(v): v\in V\}$. We define the \emph{minimum semi-degree} of $D$ as $\delta^0(D)=\min\{\delta^+(D), \delta^-(D)\}$ and the \emph{minimum degree} $\delta(D)=\min_{x\in V}\{d(x): d(x)=d^+(x)+d^-(x)\}$.

\smallskip

All paths in digraphs refer to directed paths. We define the number of arcs of a path as its \emph{length} and a \emph{$k$-path} refers to a path of order $k$. We often represent the $k$-path $P$ as $v_1\cdots v_k$ when $V(P)=\{v_1, \ldots, v_k\}$ and call $v_1v_2$ and $v_{k-1}v_k$ the \emph{first end-arc} and the \emph{last end-arc} of $P$, respectively. A $4$-path $abcd$ is called as a \emph{reverse square $4$-path} if it further satisfies that $ca, db\in A(D)$. We say a reverse square path of order $k$ to be a \emph{reverse square  $k$-path}. We can similarly define reverse square $k$-cycles.

\smallskip

For any vertex $v \in V(D)$, we say that a reverse square $4$-path $abcd$ is an \emph{absorber} of $v$ it can be extended by absorbing vertex $v$ to a reverse square $5$-path $abvcd$. We also say the reverse square $4$-path $abcd$ absorbs $v$ if it is an absorber of $v$. Clearly, in an absorber $abcd$, the vertices $a$ and $c$ are out-neighbours of $v$, $b$ and $d$ are in-neighbours of $v$, and $c$ is an in-neighbour of $b$. For two reverse square paths $P=ab\cdots cd$ and $Q=cd\cdots pq$ with $V(P)\cap V(Q)=\{c,d\}$, we denote the concatenated path as $P\circ Q$. This definition can be extended naturally to more than two paths.

For a positive integer $t$, simply write $\{1, \ldots , t\}$ as $[t]$. Throughout this paper, the notation $0<\beta\ll\alpha$ is used to make clear that $\beta$ can be selected to be sufficiently small corresponding to $\alpha$ so that all calculations required in our proof are valid.
\subsection{Tools}
\begin{theorem}{\rm \cite{Czygrinow}} Suppose that $D$ is a digraph with $\delta(D)\geq(4|V(D)|-3)/3$, and let $c \geq 0$ and $t \geq 1$ be integers with $c+t = \lfloor \frac{|V(D)|}{3}\rfloor$. Then $D$ contains $c$ cyclic triangles and $t$ transitive triangles and they are disjoint. \end{theorem}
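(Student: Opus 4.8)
The plan is to pass to the undirected shadow of $D$, where a triangle tiling comes essentially for free, and then to upgrade one such tiling to a family with the prescribed number of transitive pieces. Write $n=|V(D)|$, $m=\lfloor n/3\rfloor$, let $G$ be the underlying graph of $D$ (an edge $uv$ whenever $uv$ or $vu$ lies in $A(D)$), and let $G_2$ be its \emph{digon graph} (an edge $uv$ whenever both $uv$ and $vu$ lie in $A(D)$). The hypothesis passes to $G$: for every $x$ we have $d_G(x)=|N^+(x)\cup N^-(x)|\ge\max(d^+(x),d^-(x))\ge\frac12(d^+(x)+d^-(x))\ge\frac{4n-3}{6}\ge 2\lfloor n/3\rfloor$, so $G$ already contains $m$ vertex-disjoint triangles (e.g.\ by the Corr\'{a}di--Hajnal theorem in its triangle-packing form). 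It also passes to $G_2$: for every $x$ we have $|N^+(x)\cap N^-(x)|\ge d^+(x)+d^-(x)-(n-1)\ge n/3$, so $\delta(G_2)\ge n/3$ and in particular $\nu(G_2)\ge m$. This abundance of digons is the source of all the flexibility below.

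Fix $m$ vertex-disjoint triangles $R_1,\dots,R_m$ of $G$ and examine each $R_i$ inside $D$. Every pair in $R_i$ carries an arc, so orienting each pair (the direction is forced on a single arc, free on a digon) yields on $R_i$ a copy of a cyclic triangle or of a transitive triangle. A short case check classifies $R_i$: it is \emph{forced cyclic} (only a $C_3$ is realizable) exactly when its three pairs are single arcs forming a directed $3$-cycle; it is \emph{forced transitive} exactly when its single arcs form a transitive triangle, or form a ``star'' — two single arcs with a common head or a common tail — together with one digon; and otherwise $R_i$ is \emph{flexible}, realizable either way. In particular a triangle containing a digon is never forced cyclic. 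Hence the theorem reduces to the key claim: \emph{$D$ contains $m$ vertex-disjoint triangles, at least $m-1$ of them flexible and none of them forced cyclic.} Indeed, the attainable numbers of transitive triangles over all type-assignments to such a family form $\{0,1,\dots,m\}$ or $\{1,\dots,m\}$ (according as the at most one exceptional $R_i$ is itself flexible or forced transitive), both of which contain every $t$ with $1\le t\le m$; taking $t$ of the $R_i$ transitive and the other $c=m-t$ cyclic gives the required $c+t=m$ disjoint triangles, and this is precisely where the hypothesis $t\ge 1$ enters.

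I would prove the key claim by a stability dichotomy governed by the extremal configuration, namely a partition $V=A\cup B$ with $D[A],D[B]$ (nearly) complete digraphs, all arcs present from $A$ to $B$ and none from $B$ to $A$, with $\min(|A|,|B|)=n/3+1$ so that $\delta(D)=(4n-3)/3$; there every directed triangle lies inside $A$, inside $B$, or is ``mixed'', the mixed triangles are exactly star-plus-digon triangles, and when $|A|\not\equiv 0\pmod 3$ there is no all-cyclic triangle factor — which explains why $t=0$ cannot be allowed. In the non-extremal case ($D$ being $\varepsilon$-far from every such configuration) the digons are spread out, and one builds the $m$ triangles by using a near-perfect matching of $G_2$ as their spines and choosing the third vertices, via a Hall/defect argument applied to the common $G$-neighbourhoods (each of size $\ge n/3$), so that essentially every triangle acquires two or three digons and is flexible. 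In the near-extremal case one works inside the approximate partition by hand: tile $A$ and $B$ internally with all-digon (hence flexible) triangles, spending at most one mixed — forced transitive — triangle to absorb the residue of $|A|$ modulo $3$, the at most two uncovered vertices of $D$ supplying the needed slack. I expect the main obstacle to be exactly this last step: carrying out the Hall-type selection of third vertices uniformly while isolating and handling the near-extremal configurations, so that the number of forced-transitive triangles is pushed down to at most one.
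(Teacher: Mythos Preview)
The paper does not prove this statement: it is quoted, with citation, as a result of Czygrinow, Kierstead and Molla, and is used only through its corollary (Corollary~\ref{triangle}) to obtain a near-triangle-factor in the reduced digraph. There is therefore no ``paper's proof'' to compare your attempt against.

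On the merits of your proposal: your reduction is clean and correct. Passing to the underlying graph $G$ gives $\delta(G)\ge 2\lfloor n/3\rfloor$, which is exactly the Corr\'adi--Hajnal/Hajnal--Szemer\'edi threshold for $\lfloor n/3\rfloor$ disjoint triangles; your trichotomy (forced cyclic / forced transitive / flexible) is accurate, and in particular any triangle carrying a digon is never forced cyclic; and the observation $\delta(G_2)\ge n/3$ is the right structural handle. The reduction to the key claim --- $m$ disjoint triangles, none forced cyclic and at most one forced transitive --- is sound and correctly isolates why the hypothesis $t\ge 1$ is needed.

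Where your write-up is genuinely incomplete is the proof of the key claim itself. The stability dichotomy you outline (non-extremal: seed triangles on a large $G_2$-matching and extend via a Hall-type argument; near-extremal: tile the two parts internally and absorb the residue with one mixed triangle) is a plausible route and matches the shape of the extremal example, but as written it is a plan rather than a proof. In the non-extremal case you have not specified the stability statement you intend to invoke, nor shown that the ``spread-out digons'' hypothesis actually forces the Hall condition for choosing third vertices; in the near-extremal case you have not verified that the degree hypothesis survives inside $A$ and $B$ with enough slack to tile them, nor handled the boundary residues rigorously. You yourself flag this step as the main obstacle, and it is: completing it amounts to redoing a substantial part of the Czygrinow--Kierstead--Molla argument.
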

The following result is easily obtained from the above theorem.
\begin{corollary} \label{triangle} Every digraph with $\delta^0 (D) \geq 2|D|/ 3$ contains $\lfloor \frac{|V(D)|}{3}\rfloor-1$ disjoint cyclic triangles and one transitive triangle. \end{corollary}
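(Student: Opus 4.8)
The plan is to derive Corollary \ref{triangle} directly from the preceding theorem of Czygrinow, Kierstead and Molla, by verifying its hypothesis and then choosing its parameters $c$ and $t$ appropriately; no extremal construction or new combinatorial argument is needed.

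First I would convert the semi-degree condition into the total-degree condition required by that theorem. Let $D$ be a digraph on $n=|V(D)|$ vertices with $\delta^0(D)\ge 2n/3$. Then every vertex $v$ satisfies $d^+(v)\ge 2n/3$ and $d^-(v)\ge 2n/3$, so $d(v)=d^+(v)+d^-(v)\ge 4n/3\ge (4n-3)/3$. Hence $\delta(D)\ge (4|V(D)|-3)/3$, which is exactly the hypothesis of the quoted theorem, so the theorem is applicable to $D$.

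Next I would apply the theorem with $t=1$ and $c=\lfloor n/3\rfloor-1$. For the statement to be non-vacuous one has $n\ge 3$ (for $n\le 2$ the degree condition $\delta^0(D)\ge 2n/3$ cannot be met, so the corollary holds trivially), and then $c$ and $t$ are integers with $c\ge 0$, $t\ge 1$, and $c+t=\lfloor n/3\rfloor=\lfloor |V(D)|/3\rfloor$, so all the constraints the theorem imposes on $c$ and $t$ are satisfied. The theorem then yields $\lfloor n/3\rfloor-1$ cyclic triangles together with one transitive triangle, all pairwise disjoint, which is precisely the conclusion of the corollary.

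There is essentially no obstacle: the only points requiring a moment's attention are the elementary implication $\delta^0(D)\ge 2n/3\Rightarrow \delta(D)\ge (4n-3)/3$ used to invoke the theorem, and the bookkeeping that the chosen values $c=\lfloor n/3\rfloor-1$ and $t=1$ meet the integrality and positivity requirements $c\ge 0$, $t\ge 1$, $c+t=\lfloor |V(D)|/3\rfloor$.
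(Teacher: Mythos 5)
Your proof is correct and matches the intended derivation: the paper states the corollary is ``easily obtained'' from Theorem 2.1 without spelling it out, and you have supplied exactly the routine verification — $\delta^0(D)\ge 2n/3$ gives $\delta(D)\ge 4n/3 > (4n-3)/3$, and then one applies the theorem with $t=1$, $c=\lfloor n/3\rfloor -1$, which satisfies $c\ge 0$, $t\ge 1$, $c+t=\lfloor n/3\rfloor$ once $n\ge 3$ (and for $n<3$ the hypothesis is unsatisfiable).
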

\begin{lemma}\label{Chernoff} {\rm \cite{Janson}}
Let $X$ be a random variable with the expectation $\mathbb{E}X$, and let $a$ be any real number with $0<a<3/2$. Then the following statements hold.\\
$(1)$ \emph{(}Chernoff's inequality\emph{)} $\mathbb{P}(|X-\mathbb{E}X|>a\mathbb{E}X)<2e^{-\frac{a^2}{3}\mathbb{E}X}$.\\
$(2)$ \emph{(}Markov's inequality\emph{)}
$\mathbb{P}(X\geq a)\leq\frac{\mathbb{E}X}{a}$.
\end{lemma}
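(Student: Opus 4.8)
The plan is to prove part (2) first and then deduce part (1) from it, since the Chernoff bound is obtained by applying Markov's inequality to an exponential of $X$. As is standard, and as is the setting of \cite{Janson}, I understand $X$ to be a non-negative random variable in (2), and in (1) to be a sum $X=\sum_{i=1}^{m}X_i$ of independent indicator random variables with $\mathbb{P}(X_i=1)=p_i$, so that $\mu:=\mathbb{E}X=\sum_{i=1}^{m}p_i$.

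For part (2): since $X\geq 0$ and $a>0$,
\[
\mathbb{E}X\;\geq\;\mathbb{E}\bigl[X\,\mathbf{1}_{\{X\geq a\}}\bigr]\;\geq\;a\,\mathbb{P}(X\geq a),
\]
and dividing by $a$ gives the claim.

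For part (1), I would use the exponential-moment method. Fix $t>0$. By independence and $1+x\leq e^{x}$,
\[
\mathbb{E}e^{tX}\;=\;\prod_{i=1}^{m}\mathbb{E}e^{tX_i}\;=\;\prod_{i=1}^{m}\bigl(1+p_i(e^{t}-1)\bigr)\;\leq\;\prod_{i=1}^{m}e^{p_i(e^{t}-1)}\;=\;e^{\mu(e^{t}-1)}.
\]
Applying part (2) to the non-negative variable $e^{tX}$ at threshold $e^{t(1+a)\mu}$ and then choosing $t=\ln(1+a)>0$ gives
\[
\mathbb{P}\bigl(X\geq(1+a)\mu\bigr)\;\leq\;e^{\mu(e^{t}-1)-t(1+a)\mu}\;=\;\exp\!\Bigl(\mu\bigl[a-(1+a)\ln(1+a)\bigr]\Bigr).
\]
Symmetrically, for $0<a<1$ the choice $t=\ln(1-a)<0$ gives $\mathbb{P}(X\leq(1-a)\mu)\leq\exp\bigl(\mu[-a-(1-a)\ln(1-a)]\bigr)$, while for $a\geq 1$ the lower-tail contribution is at most $e^{-\mu}$ (the event is empty when $a>1$). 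Summing the two tail estimates and relaxing the lower-tail exponent from $a^{2}/2$ to $a^{2}/3$ yields $\mathbb{P}(|X-\mathbb{E}X|>a\mathbb{E}X)<2e^{-a^{2}\mathbb{E}X/3}$.

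The only non-routine point, and hence the main obstacle, is the pair of scalar inequalities
\[
a-(1+a)\ln(1+a)\leq-\tfrac{a^{2}}{3}\quad(0<a<\tfrac32),\qquad -a-(1-a)\ln(1-a)\leq-\tfrac{a^{2}}{2}\quad(0<a<1).
\]
Each follows from an elementary second-derivative analysis: for $f(a)=a-(1+a)\ln(1+a)+a^{2}/3$ one has $f(0)=f'(0)=0$ and $f''(a)=\tfrac23-\tfrac1{1+a}$, which is negative then positive (sign change at $a=\tfrac12$), so $f$ is decreasing then increasing on $[0,\infty)$; since $f(\tfrac32)<0$ we obtain $f\leq0$ on $[0,\tfrac32]$, and it is precisely this endpoint check that forces the hypothesis $a<3/2$. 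For $g(a)=-a-(1-a)\ln(1-a)+a^{2}/2$ one has $g(0)=g'(0)=0$ and $g''(a)=-a/(1-a)<0$ on $(0,1)$, so $g'<0$ and hence $g<0$ there. Everything else is the textbook moment-generating-function bookkeeping, so I anticipate no further difficulty.
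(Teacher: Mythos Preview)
Your proof is correct and follows the standard exponential-moment (Cram\'er--Chernoff) approach; the scalar inequalities are handled properly, and the restriction $a<3/2$ indeed arises exactly where you say. The paper itself does not prove this lemma---it is quoted from \cite{Janson} as a standard tool---so there is no alternative argument to compare against.
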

%
%
%
%

In the proof of Theorem \ref{main}, we need the Regularity lemma and the Blow-up lemma for digraphs. The interested readers can refer to \cite{Komlos1999,Komlos1996} for a survey on the Regularity Lemma and the Blow-up lemma. Before giving the statement of these lemmas, we need some definitions. Let $G=(A, B)$ be a bipartite graph. The \emph{density} of $G=(A, B)$ is
$$d_G(A, B) :=\frac{e_G(A, B)}{|A||B|}.$$
We often write $d(A, B)$ if this is unambiguous. Given $\epsilon > 0$, we say that $G=(A, B)$ is \emph{$\epsilon$-regular} if for all subsets $X \subseteq A$ and $Y \subseteq B$ with $|X| > \epsilon|A|$ and $|Y | > \epsilon|B|$ we have that $|d(X, Y )-d(A, B)| < \epsilon$. Furthermore, for any real number $d$ with $0\leq d\leq1$, we say that $G$ is \emph{$(\epsilon, d)$-super-regular} if it is $\epsilon$-regular, and satisfies $d_G(a) \geq (d -\epsilon)|B|$ for all $a \in A$ and $d_G(b) \geq (d - \epsilon)|A|$ for all $b \in B$.

The Regularity lemma for digraphs, called as the Diregularity lemma, was proved by Alon and Shapira \cite{Alon}.
We will use the degree form of the Diregularity lemma which can be easily derived from the standard version, and so we omit its proof.

\begin{lemma} \label{regular}{\rm (}Degree form of the Diregularity lemma{\rm )} For every $\epsilon \in (0, 1)$ and any integer $M^\prime $ there are integers $M=M(\epsilon, M^\prime)$ and $n_0=n_0(\epsilon, M^\prime)$ such that if $D$ is a digraph on $n \geq n_0$ vertices and $d$ is any real number with $0\leq d\leq1$, then there is a partition of $V(D)$ into $V_0, V_1,\ldots,V_k$ with $M^\prime \leq k \leq M$ such that\\
 $(1)$ $|V_0| \leq \varepsilon n$, and \\
 $(2)$ $|V_1|=\cdots = |V_k| =: m$,\\
  and a spanning subdigraph $D^\prime$ of $D$ such that\\
$(3)$ for all $i = 1,\ldots,k$ the digraph $D^\prime [V_i]$ is empty,\\
$(4)$ $d^+_{D^\prime} (x) > d^+_D(x)-(d + \varepsilon)n$ for all vertices $x \in D$, and $d^-_{D^\prime} (x)>d^-_D(x)-(d + \varepsilon)n$ for all vertices $x \in D$, and\\
$(5)$ for all $1 \leq i, j \leq k$ with $i \neq j$ the bipartite graph whose vertex classes are $V_i$ and $V_j$ and whose edges are all the $V_i-V_j$ edges in $D^\prime$ obtained by deleting their directions is $\varepsilon$-regular and has density either $0$ or density at least $d$.
\end{lemma}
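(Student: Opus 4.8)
The plan is to deduce the degree form from the standard Diregularity lemma of Alon and Shapira by a cleaning-and-sparsifying argument, following the usual derivation of degree forms of regularity lemmas. Given $\epsilon$ and $M'$, first fix an auxiliary constant $\epsilon_0$ with $0<\epsilon_0\ll\epsilon$ together with $\eta=\eta(\epsilon_0)$ satisfying $\epsilon_0\ll\eta\ll\epsilon$, and apply the standard Diregularity lemma with parameters $\epsilon_0$ and $M_0':=\max\{2M',\lceil 1/\epsilon_0\rceil\}$; this yields integers $M$ and $n_0$, which we output. For a digraph $D$ on $n\ge n_0$ vertices the standard lemma produces a partition $V(D)=W_0\cup W_1\cup\cdots\cup W_k$ with $M_0'\le k\le M$, $|W_0|\le\epsilon_0 n$, $|W_1|=\cdots=|W_k|=:m$ (so $m\le n/k\le\epsilon_0 n$), such that all but at most $\epsilon_0 k^2$ of the ordered pairs $(W_i,W_j)$ with $1\le i\ne j\le k$ induce an $\epsilon_0$-regular bipartite digraph on the arcs from $W_i$ to $W_j$.

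Next I would clean the partition in two rounds. Call a cluster \emph{bad of type I} if it is an endpoint of more than $\sqrt{\epsilon_0}\,k$ of the irregular ordered pairs; since there are at most $\epsilon_0 k^2$ such pairs, a double-counting argument shows fewer than $2\sqrt{\epsilon_0}\,k$ clusters are bad of type I, and I discard all their vertices. For the second round I would use the standard fact that if $(W_i,W_j)$ is $\epsilon_0$-regular then all but at most $2\epsilon_0 m$ vertices of $W_i$ are \emph{typical towards $W_j$}, in the sense that their numbers of out- and in-neighbours inside $W_j$ deviate from the expected values $d_D(W_i,W_j)\,m$ and $d_D(W_j,W_i)\,m$ by at most $\epsilon_0 m$. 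Call a vertex \emph{bad of type II} if it is atypical towards more than $\sqrt{\epsilon_0}\,k$ of the surviving clusters; summing $2\epsilon_0 m$ over the at most $k$ partners of each $W_i$ shows that at most $2\sqrt{\epsilon_0}\,m$ vertices of $W_i$ are bad of type II, so discarding all of them removes at most $2\sqrt{\epsilon_0}\,n$ vertices. Finally I would trim every surviving cluster down to the common size $m^*$ of the smallest one, which discards at most another $2\sqrt{\epsilon_0}\,n$ vertices since each surviving cluster still has at least $(1-2\sqrt{\epsilon_0})m$ vertices. Denote the trimmed clusters by $V_1,\dots,V_{k^*}$ and let $V_0$ be $W_0$ together with everything discarded; then $|V_0|\le\epsilon_0 n+O(\sqrt{\epsilon_0})\,n\le\epsilon n$, $m^*\le m\le\epsilon n$, and $M'\le k^*\le M$ because $k^*\ge k-2\sqrt{\epsilon_0}\,k\ge M'$.

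Now define the spanning subdigraph $D'$ of $D$ by deleting every arc that lies inside some $V_i$ ($i\ge 1$), and every arc from $V_i$ to $V_j$ ($1\le i\ne j\le k^*$) for which the original pair $(W_i,W_j)$ was either irregular or had density less than $d+\eta$; all remaining arcs, in particular all arcs incident to $V_0$, are kept. Conditions $(1)$--$(3)$ hold by construction. For $(5)$, a pair surviving in $D'$ arises from an $\epsilon_0$-regular pair of density at least $d+\eta$ after removing $O(\sqrt{\epsilon_0})\,m$ vertices from each side; such a deletion preserves $\epsilon$-regularity (with the constants chosen small) and keeps the density above $d$ — this is precisely why the deletion threshold is $d+\eta$ rather than $d$ — while every other pair is empty in $D'$ and hence has density $0$.

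The crux is condition $(4)$, and this is exactly where the two cleaning rounds are needed. A vertex of $V_0$ keeps all of its arcs, so $(4)$ is trivial for it. For $x\in V_i$ with $i\ge 1$ the deleted out-arcs of $x$ split into three groups. First, arcs inside $V_i$: at most $m\le\epsilon_0 n$. Second, arcs into the at most $\sqrt{\epsilon_0}\,k$ irregular partner clusters of $W_i$ together with arcs into the at most $\sqrt{\epsilon_0}\,k$ regular partner clusters towards which $x$ is atypical — the bounds $\sqrt{\epsilon_0}\,k$ hold because $x$ survived both cleanings — contributing at most $2\sqrt{\epsilon_0}\,k\cdot m\le 2\sqrt{\epsilon_0}\,n$. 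Third, arcs into regular partner clusters $W_j$ with $d_D(W_i,W_j)<d+\eta$ towards which $x$ is typical: here $|N^+_D(x)\cap V_j|\le(d_D(W_i,W_j)+\epsilon_0)\,m<(d+\eta+\epsilon_0)\,m$, and summing over the at most $k$ such clusters gives at most $(d+\eta+\epsilon_0)\,km\le(d+\eta+\epsilon_0)\,n$. The total out-degree loss at $x$ is therefore at most $(d+\eta+2\epsilon_0+2\sqrt{\epsilon_0})\,n<(d+\epsilon)\,n$ once $\epsilon_0$ (hence $\eta$) is small enough, and the in-degree bound follows by the symmetric argument, since the type II cleaning was set up to control in-neighbourhoods as well. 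I expect this degree bookkeeping — ensuring that no vertex of a surviving cluster sends or receives too many arcs across irregular or atypically dense pairs — to be the only genuinely delicate point; given the two-stage cleaning, the rest is routine.
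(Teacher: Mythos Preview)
The paper does not actually prove this lemma: immediately before stating it, the authors write that the degree form ``can be easily derived from the standard version, and so we omit its proof.'' Your proposal supplies precisely that omitted derivation --- apply the Alon--Shapira Diregularity lemma with a much smaller parameter $\epsilon_0$, discard clusters lying in too many irregular pairs, discard vertices atypical towards too many partners, trim to equal sizes, and then sparsify by deleting arcs inside clusters and across irregular or low-density pairs --- and the degree bookkeeping you give for condition~(4) is the standard one. So your approach matches what the paper gestures at, and is correct; there is nothing further to compare.
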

\emph{Remark} $1.$ In Lemma \ref{regular}, the vertex sets $V_1,\ldots,V_k$ are called as \emph{clusters}, and $V_0$ is called as the \emph{exceptional set}. Also, the spanning subdigraph $D^\prime$ is \emph{the pure digraph} with parameters $\epsilon, d$. In particular, the last condition in Lemma \ref{regular} says that all pairs of clusters are $\epsilon$-regular in both directions (but possibly with different densities).

$2.$ \emph{The reduced digraph} $R$ with parameters $\epsilon, d$ is the digraph with the vertex set $[k]$,  in which $ij$ is an arc if and only if the bipartite graph with vertex classes $V_i$ and $V_j$ whose edges are all the $V_i-V_j$ edges in $D^\prime$ is $\epsilon$-regular and has density at least $d$. That is, if $D^\prime$ is the pure digraph, then $ij$ is an arc in $R$ if and only if there is a $V_i-V_j$ edge in $D^\prime$. It is easy to see that the reduced digraph $R$ obtained from Lemma \ref{regular} has the minimum semidegree
\begin{equation}\label{degree-R}
  \delta^0(R) \geq (\delta^0(D)/|D|-d-2\epsilon)|R|.
\end{equation}
\begin{lemma} {\rm \cite{Komlos}}\label{blowup} {\rm (}Blow-up lemma{\rm )} For any graph $F$ with the vertex set $[k]$, and any positive numbers $d$ and $\Delta$, there is a positive real $\eta_0 = \eta_0(d, \Delta, k)$ such that the following holds for all positive numbers $l_1,\ldots, l_k$ and all $0 < \eta \leq \eta_0$. Let $F^\prime$ be the graph obtained from $F$ by replacing each vertex $i \in F$ with a set $V_i$ of  $l_i$ new vertices and joining all vertices in $V_i$ to all vertices in $V_j$ whenever $ij$ is an edge of $F$. Let $G^\prime$ be a spanning subgraph of $F^\prime$ such that for every edge $ij \in F$ the graph $(V_i, V_j )_{G^\prime}$ is $(\eta, d)$-super-regular. Then $G^\prime$ contains a copy of every subgraph $H$ of $F^\prime$ with $\Delta(H) \leq \Delta$.\end{lemma}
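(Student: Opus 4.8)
The plan is to follow the Koml\'{o}s--S\'{a}rk\"{o}zy--Szemer\'{e}di method: embed $H$ into $G'$ by a randomized greedy algorithm, reserving a small \emph{buffer} of vertices in each cluster to be placed last by a matching argument. Fix a constant hierarchy $\eta\ll\epsilon'\ll\beta\ll d,1/\Delta,1/k$. Adding isolated vertices to $H$ if necessary we may assume $H$ spans $F'$ (which does not change $\Delta(H)$), and we look for a partition-respecting embedding $\phi$ mapping the vertices of $H$ in $V_i$ injectively into $V_i$; since $H\subseteq F'$ has no edge inside any $V_i$, the only requirement on $\phi$ is that every cross-cluster edge $xy\in E(H)$ go to an edge of $G'$. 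In each cluster pick $B_i\subseteq V_i$ with $|B_i|=\beta|V_i|$ so that $B:=\bigcup_i B_i$ is independent in $H$ --- possible by a standard alteration argument since $\Delta(H)\le\Delta$ --- and initialise $A_i:=V_i$ (the available slots of cluster $i$) and $C(x):=V_i$ for every vertex $x$ of $H$ in $V_i$ (the candidate set of $x$).

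\textbf{Phase 1 (embedding $V(H)\setminus B$).} Process the non-buffer vertices in any fixed order. To embed the current vertex $x$, say in $V_i$: choose $\phi(x)$ at random --- uniformly, up to a mild weighting --- among the $w\in A_i\cap C(x)$ that are not \emph{bad}, where $w$ is bad if either (i)~$|C(y)\cap N_{G'}(w)|<(d-\epsilon')|C(y)|$ for some not-yet-embedded $H$-neighbour $y$ of $x$, or (ii)~choosing $w$ would violate a quasirandomness condition on the available sets (see below); then remove $\phi(x)$ from $A_i$ and from every candidate set, and replace $C(y)$ by $C(y)\cap N_{G'}(\phi(x))$ for each unembedded $H$-neighbour $y$ of $x$. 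By $\epsilon'$-regularity of the relevant pair at most $\epsilon'|V_i|$ vertices are bad of type~(i) per neighbour, hence at most $\Delta\epsilon'|V_i|$ in all; as $x$ has at most $\Delta$ neighbours and each intersection with a set $N_{G'}(w)$ keeps a $(d-\epsilon')$-fraction, the bound $|C(x)|\ge(d-\epsilon')^{\Delta}|V_i|>\epsilon'|V_i|$ is maintained for every unembedded $x$, and since buffer vertices are never embedded now we have $|A_i|\ge\beta|V_i|$ throughout, so the set of legal choices for $\phi(x)$ is always nonempty and in fact linear in $|V_i|$. The subtle invariant is the quasirandomness one: each $A_i$ remains well-distributed inside $V_i$ relative to all current candidate sets, so that after Phase 1 the leftover slots still meet every candidate set in a positive fraction. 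This is where the randomness is exploited: the relevant quantities change by $O(1)$ per step of the process, hence concentrate around their expectations by an Azuma--Hoeffding bound, and one union-bounds over all $x$ and $i$.

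\textbf{Phase 2 (embedding $B$).} When Phase 1 finishes, $U_i:=A_i$ has size $|B_i|$, and $C(x)$ is final for every $x\in B_i$ because all neighbours of $x$ are non-buffer and already embedded. Form the bipartite graph $\mathcal{C}_i$ on parts $B_i$ and $U_i$ joining $x$ to $w$ exactly when $w\in C(x)$. Using the candidate-size bound, the super-regularity of the underlying pairs $(V_i,V_j)_{G'}$, and the quasirandomness accumulated in Phase 1, one shows that $\mathcal{C}_i$ is dense and expanding enough to possess a perfect matching $M_i$ by K\"{o}nig's theorem (equivalently, by checking Hall's condition $|N_{\mathcal{C}_i}(S)|\ge|S|$ for all $S\subseteq B_i$, treating small, intermediate, and large $S$ separately). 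Here the hypothesis of super-regularity, rather than mere $\epsilon$-regularity, is essential: an $\epsilon$-regular pair may contain vertices of tiny degree, and the Blow-up Lemma is in fact false for such pairs, since a leftover slot with no $G'$-neighbour in some adjacent cluster could never receive a buffer vertex. Put $\phi(x):=M_i(x)$ for $x\in B_i$. The resulting $\phi$ is injective (distinct clusters are disjoint; within a cluster Phase 1 fills distinct slots of $V_i\setminus U_i$ while Phase 2 matches $B_i$ bijectively onto $U_i$) and edge-preserving (a cross-edge with both ends non-buffer is handled by the candidate-set updates of Phase 1; for a cross-edge with an end $x\in B_i$, the other end $y$ is non-buffer, hence was embedded in Phase 1, and $C(x)\subseteq N_{G'}(\phi(y))$ forces $\phi(x)\phi(y)\in E(G')$). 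Thus $G'$ contains a copy of $H$, and since $\eta_0$ depends only on $d,\Delta,k$ the whole argument is uniform over all admissible $H$ and all $l_1,\dots,l_k$.

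The main obstacle is maintaining enough control during Phase 1 to guarantee that the matchings in Phase 2 exist: the random greedy embedding must be run precisely enough that, once nearly all of $H$ is placed, the leftover slots $U_i$ still hit every buffer vertex's constant-fraction candidate set in a constant fraction and the graphs $\mathcal{C}_i$ expand enough for K\"{o}nig's theorem. Making this rigorous is what forces the constant hierarchy above and a delicate martingale concentration analysis (with a separate treatment of unusually small clusters should the $l_i$ be very unequal). By comparison, the geometric shrinkage of candidate sets, the use of super-regularity to avoid low-degree slots, and the verification that $\phi$ is an embedding are routine once the constants are arranged correctly.
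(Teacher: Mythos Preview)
The paper does not prove this lemma: it is stated with the citation \cite{Komlos} and used as a black box, so there is no ``paper's own proof'' to compare against. Your sketch follows the original Koml\'{o}s--S\'{a}rk\"{o}zy--Szemer\'{e}di argument (randomized greedy embedding of non-buffer vertices with candidate-set maintenance, followed by a Hall/K\"{o}nig matching for the buffer), which is the standard and expected approach.

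One technical point worth tightening: in the original proof the buffer vertices are chosen not merely to be independent in $H$ but to be pairwise at distance at least some constant (typically $4$), so that their $H$-neighbourhoods are disjoint and the candidate-set analyses for distinct buffer vertices do not interfere. Your alteration argument for an independent $B$ does not quite give this stronger spacing, though the same averaging works once the correct condition is stated. Apart from that, the outline is accurate, including the crucial observation that super-regularity (not mere regularity) is what guarantees no leftover slot is isolated from an adjacent cluster.
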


In order to apply Lemma \ref{blowup}, it is sufficient to ensure that all the arcs in a specific oriented subgraph of the reduced digraph $R$ correspond to $(\epsilon, d)$-super-regular pairs of clusters. This is guaranteed by the following result from \cite{Kelly}.

\begin{lemma} {\rm \cite{Kelly} } \label{superregular}  Let $M^\prime$, $n_0$, $l$ be integers, and let $\varepsilon, d$ be positive constants such that $1/n_0 \ll 1/M^\prime \ll \varepsilon \ll d \ll 1/l$. Suppose that $G$ is an digraph of order at least $n_0$, and $R$ is the reduced digraph, and $G^\prime$ is the pure digraph obtained by applying Lemmas \ref{regular} with parameters $\varepsilon, d$ and $M^\prime$ to $G$.
Suppose that $G^\ast$ is an oriented graph obtained from $G^\prime$ by deleting all $V_i-V_j$ edges for some pairs.
Let $R^\prime$ be an oriented subgraph of $R$ with $\Delta(R^\prime) \leq l$.
Let $H$ be the underlying graph of $G^\ast$. Then one can delete $2l\varepsilon|V_i|$ vertices from each cluster $V_i$ to obtain subclusters $V_i^\prime \subset V_i$ in such a way that $H$ contains a subgraph $H_{R^\prime}$ whose vertex set is the union of all the $V_i^\prime$ and such that \\
$(1)$ $(V_i^\prime, V_j^\prime )_{H_{R^\prime}}$ is $(\sqrt{\varepsilon}, d- 4l\varepsilon)$-super-regular whenever $ij \in E(R^\prime)$.\\
$(2)$ $(V_i^\prime, V_j^\prime )_{H_{R^\prime}}$ is $\sqrt{\epsilon}$-regular and has density $d-4l\epsilon$ whenever $ij \in E(R)$.
\end{lemma}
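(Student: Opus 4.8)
The plan is to run the standard ``cleaning'' argument that upgrades $\varepsilon$-regular pairs to super-regular ones, applied uniformly across all clusters, and then trim a few edges to hit the exact target density. Since this lemma is quoted verbatim from \cite{Kelly}, in the paper itself one would simply cite it; what follows is the shape of the argument.

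The one elementary input is that in an $\varepsilon$-regular pair $(A,B)$ of density $d_0$, all but at most $\varepsilon|A|$ vertices $a\in A$ satisfy $|N(a)\cap B|\ge (d_0-\varepsilon)|B|$; otherwise the set $X$ of offending vertices, paired with $Y=B$, would contradict $\varepsilon$-regularity. First I would, for each cluster $V_i$, collect into a set $B_i$ all vertices of $V_i$ that fail the analogous degree bound (in the orientation dictated by $R^\prime$, measured in $G^\ast$) with respect to some cluster $V_j$ for which $ij$ or $ji$ lies in $E(R^\prime)$. Because $\Delta(R^\prime)\le l$, at most $2l$ such indices $j$ occur, so $|B_i|\le 2l\varepsilon m$; I would then enlarge $B_i$ to size exactly $2l\varepsilon m$ and set $V_i^\prime=V_i\setminus B_i$, which is the promised deletion.

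Next I would verify super-regularity for $ij\in E(R^\prime)$. By construction every surviving vertex of $V_i^\prime$ has at least $(d-\varepsilon)m$ neighbours of the correct orientation in $V_j$ (these edges survive in $G^\ast$ precisely because $ij$ or $ji$ is in $E(R^\prime)$), hence at least $(d-\varepsilon)m-|B_j|=(d-\varepsilon-2l\varepsilon)m\ge (d-4l\varepsilon)|V_j^\prime|$ of them inside $V_j^\prime$, using $m\ge|V_j^\prime|$ and $l\ge 1$; the symmetric bound holds from the $V_j^\prime$ side. The regularity of the restricted pairs, needed both here and in part (2), is the usual slicing lemma: restricting an $\varepsilon$-regular pair to subsets of relative size at least $1-2l\varepsilon\ge 1/2$ gives a $2\varepsilon$-regular pair whose density moves by less than $\varepsilon$, and $2\varepsilon\le\sqrt{\varepsilon}$ since $\varepsilon\ll 1/l$. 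Finally, for every $ij\in E(R)$ the restricted density is at least $d-\varepsilon\ge d-4l\varepsilon$, so I would delete a suitably spread-out edge set from each such pair to bring the density down to exactly $d-4l\varepsilon$ while keeping it $\sqrt{\varepsilon}$-regular (again a routine lemma); carrying this out for all $ij\in E(R)$ defines $H_{R^\prime}$ on $\bigcup_i V_i^\prime$ with the stated properties.

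The main thing to watch is coherence of the choices rather than any single hard estimate: one fixed set $V_i^\prime$ must serve every edge of $R^\prime$ through $V_i$, and the accumulated losses --- up to $2l\varepsilon m$ vertices per cluster, the passage from $\varepsilon$ to $\sqrt{\varepsilon}$, from $d$ to $d-4l\varepsilon$, and the edge-trimming --- must all stay within budget simultaneously. This is exactly what the hierarchy $1/n_0\ll 1/M^\prime\ll \varepsilon\ll d\ll 1/l$ is arranged to guarantee.
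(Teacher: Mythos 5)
The paper does not prove this lemma; it is cited verbatim from \cite{Kelly}, so there is no in-paper proof to compare against. Your sketch reproduces the standard ``cleaning'' argument that underlies Kelly's proof and is essentially correct for part~(1): for each $i$ remove the at most $2l\varepsilon m$ vertices of $V_i$ with atypically small degree (in the orientation dictated by $R'$, measured in $G^\ast$) towards the at most $2l$ clusters $V_j$ with $ij$ or $ji\in E(R')$; observe that each surviving vertex keeps degree at least $(d-\varepsilon)m-2l\varepsilon m\ge(d-4l\varepsilon)|V_j'|$; and invoke the slicing lemma to pass from $\varepsilon$-regularity of the full pair to $\sqrt{\varepsilon}$-regularity of the restricted pair, the hierarchy $\varepsilon\ll 1/l$ absorbing the various losses.

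For part~(2), however, there is a genuine gap as written. You assert that the restricted density is at least $d-\varepsilon$ for every $ij\in E(R)$, but this implicitly replaces $G^\ast$ by $G'$. In the statement, $H$ is the underlying graph of $G^\ast$, and $G^\ast$ is obtained from $G'$ by deleting \emph{all} $V_i$--$V_j$ edges for certain pairs; for any such deleted pair with $ij\in E(R)$ the density of $(V_i',V_j')$ in $H_{R'}$ is $0$ regardless of how the subclusters are chosen, so the claimed inequality fails. That clause would need to be restricted to pairs whose $V_i$--$V_j$ edges survive in $G^\ast$, or else stated relative to the underlying graph of $G'$. (The exact-density phrasing ``has density $d-4l\varepsilon$'' also appears to be a slight misquote of \cite{Kelly}, which asks for density \emph{at least} $d-4l\varepsilon$; in the original form your final edge-trimming step is unnecessary.) Since the present paper invokes only part~(1) in the proof of Lemma~\ref{pathcover}, none of this affects Theorem~\ref{main}, but the sketch of part~(2) would not close as given.
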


\section{Proof of Theorem \ref{main}}

In this section, we call reverse square $k$-paths (reverse square $k$-cycles) as $k$-paths ($k$-cycles) for simplicity.

\subsection{Main lemmas}
In this subsection, let $\gamma$ be any real with $\gamma\ll 1$, and let $D$ be any digraph with $\delta^0(D)\geq(2/3+\gamma)n$. The following lemma asserts that any two disjoint arcs can be connected by a short directed reverse square path.

\begin{lemma} \label{connnect} {\rm (}Connecting Lemma{\rm )} Let $D$ be the digraph as showed in Theorem \ref{main}, and let $\gamma$ be a real with $\gamma\ll 1/6$. For every two disjoint arcs of $D$, there is a $k$-path with $k \leq 4/\gamma$ connecting them.
\end{lemma}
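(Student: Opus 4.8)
We want to connect two disjoint arcs $xy$ and $uv$ by a short reverse square path. A reverse square path $v_1 v_2 \cdots v_k$ requires, for each consecutive triple, both the "forward" arcs $v_i v_{i+1}$ and the "backward" arc $v_{i+2} v_i$. So the natural approach is greedy: starting from the end-arc $xy$, repeatedly extend the path by one vertex, maintaining the reverse-square property, until we can "close up" onto the arc $uv$. The key point is that the degree condition $\delta^0(D) \geq (2/3+\gamma)n$ is strong enough that at each step a large linear-sized set of candidate vertices is available, and moreover that after a constant number of steps we can force a connection to the target arc.

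**Key steps.**

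First, I would establish a one-step extension claim: given a reverse square path ending in an arc $ab$ (i.e. the last two vertices are $a$ then $b$, with $ab \in A(D)$ and whatever backward arc is needed already present), the set of vertices $w$ such that $abw$ can be appended — meaning $bw \in A(D)$ and $wa \in A(D)$ — has size at least $n - 2((1/3-\gamma)n) = (1/3 + 2\gamma)n$, since $w$ must lie in $N^+(b) \cap N^-(a)$ and each of those has co-size at most $(1/3-\gamma)n$. Crucially this candidate set is linear in $n$, so we have enormous freedom. Second, I would run this extension from the tail arc $xy$ for a controlled number of steps, and symmetrically (using in-neighbourhoods) "grow backwards" from the head arc $uv$, and then show the two growing ends can be joined. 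The cleanest way to join: after one extension step from $xy$ we have a large set $S$ of vertices that can serve as the third vertex; we want to find a vertex $w \in S$ together with the constraint coming from $u, v$. Concretely, to finish a path $\cdots a b w u v$ we need $w$ chosen so that $bw, wu \in A(D)$ and also the backward arcs $wa$ and $ub$ hold (from the reverse-square condition on triples $abw$ and $bwu$ and $wuv$). Each such condition excludes at most $(1/3-\gamma)n$ vertices, so intersecting a constant number of such neighbourhood conditions still leaves $\geq (1 - c(1/3-\gamma))n > 0$ choices provided the number $c$ of constraints is below $3$; when $c \geq 3$ we need an intermediate vertex, which is exactly why a path of length $O(1/\gamma)$ rather than $O(1)$ is allowed.

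**The mechanism for bounded length.**

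To make the length bound $k \leq 4/\gamma$ concrete, I would argue as follows. Grow a reverse square path forward from $xy$: at step $i$ we have appended $i$ new vertices and the "reach" set (vertices reachable as a valid next vertex) has size $\geq (1/3+2\gamma)n$ at every step, and it is not hard to see that after finitely many steps the set of endpoints of such paths of a given length stabilizes to almost all of $V(D)$ — indeed since each extension set is of size $(1/3+2\gamma)n$, after two steps a counting/expansion argument shows that the set of possible "last arcs" covers a set of pairs of density bounded below, and by iterating $O(1/\gamma)$ times this density exceeds any fixed threshold, in particular exceeds $1 - \gamma$, so that among these last arcs one is compatible with the four neighbourhood constraints imposed by $u$ and $v$. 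The bookkeeping: we lose a factor controlled by $1/3 + 2\gamma < 1$ per step in the "forbidden" direction but gain because unions of shifted neighbourhoods saturate; balancing these gives the $4/\gamma$ bound. I should be slightly careful that the two arcs $xy, uv$ are disjoint and that none of the constant number of new vertices collide with $\{x,y,u,v\}$ or with each other — but since all candidate sets are linear and only a constant number of vertices are ever used, we can always avoid a bounded bad set, absorbing the loss into the $\gamma$ slack.

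**Main obstacle.**

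The delicate part is the saturation/expansion argument that guarantees, after $O(1/\gamma)$ extension steps, that the set of reachable configurations (last arcs of the grown path) is so large — density at least $1 - \gamma$, say — that it must meet the small set of "good final vertices" determined by the target arc $uv$ and the reverse-square backward-arc requirements. Making this rigorous requires tracking not just a set of vertices but a set of valid \emph{last arcs} of reverse square paths, showing this set grows (its complement shrinks geometrically, or at least additively by $\Omega(\gamma n)$ per step until it is tiny), and verifying that each extension step preserves the reverse-square property on the newly created triples. I expect this to be where essentially all the work lies; the degree-counting for a single step is routine, but controlling how the "frontier" of grown paths expands over $\Theta(1/\gamma)$ steps, while keeping all used vertices distinct and disjoint from the two given arcs, is the crux.
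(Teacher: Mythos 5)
Your single-step extension computation is correct and matches the paper's: the set of valid next vertices after an arc $ab$ (namely $N^+(b)\cap N^-(a)$) has size at least $(1/3+2\gamma)n$, and symmetrically for growing backwards from $cd$. However, the mechanism you propose for bounding the length and forcing a meeting is where the proposal breaks down, and it is genuinely different from what the paper does.

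You want to show that after $O(1/\gamma)$ extension steps the set of reachable ``last arcs'' attains density at least $1-\gamma$, so that it must meet the small set of good final vertices determined by $cd$. You explicitly acknowledge you do not know how to prove this saturation; and in fact the paper does not prove anything that strong, nor does it need to. Note also that $(1/3+2\gamma)n$ being the size of each step's candidate set does not by itself force the frontier to saturate: the candidate sets at successive steps could keep overlapping a fixed linear-sized region without their union growing towards $(1-\gamma)n$. Some additional structural leverage is needed.

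The paper's actual argument is different. It builds an ``$ab$-out-cascade'' $X_0,X_1,\dots$ together with bipartite digraphs $G_j(X_{j-1},X_j)$ tracking which extensions are legal, and its goal is not to make $X_j$ nearly all of $V(D)$ but merely to find a single \emph{heavy vertex}: some $y\in X_j$ with $d^-_{G_j}(y)\ge(1/3+\gamma)n$. The growth argument is a proof by contradiction: if no layer up to $j_0=\lceil 1/\gamma\rceil+1$ contains a heavy vertex, then every $y\in X_j$ has in-degree below $(1/3+\gamma)n$ in $G_j$, so $|A(G_j)|<|X_j|(1/3+\gamma)n$; combined with the lower bound $|A(G_j)|\gtrsim |X_{j-1}|(1/3+2\gamma)n$ this forces $|X_j|\gtrsim\frac{1+6\gamma}{1+3\gamma}|X_{j-1}|$, a multiplicative gain, and after $j_0$ steps $|X_{j_0}|>n$, impossible. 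Once a heavy vertex $b_1$ in the out-cascade and a heavy vertex $b_2$ in the $cd$-in-cascade exist, the two are joined directly by a short degree-counting argument (one or two intermediate vertices suffice because a heavy vertex already has $(1/3+\gamma)n$ usable back-neighbours). This is the idea you are missing: you do not need the frontier to saturate; you only need one vertex of the frontier to be rich in connections to the previous layer, and the pigeonhole/average-degree argument delivers that within $O(1/\gamma)$ steps.

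There is also a second, more technical gap. The paper does not grow actual paths; it grows the sets $X_j$ and the auxiliary bipartite graphs $G_j$ and $B_y^j$, pruning out low-degree vertices (threshold $\sqrt n$) and requiring $d^+_{B_y^j}(z)\ge n^{1/4}$ precisely so that, at the end, one can back-trace through the cascade to extract an honest reverse square path on \emph{distinct} vertices. Your remark that ``we can always avoid a bounded bad set'' is not enough here: the number of candidate traversals through the cascade must stay polynomially large at every layer to guarantee a disjoint back-trace, and that requires the explicit degree thresholds the paper imposes. As written, your proposal leaves the two central steps (the saturation/meeting argument and the disjointness of the extracted path) as open problems, so the proof is incomplete.
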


\begin{proof}
Let  $ab$ and $cd$ be two disjoint arcs in $D$. In the following,
we first build a out-cascade structure as follows. We construct vertex sets $X_0,X_1,\ldots, X_i, \ldots$,  and bipartite digraphs $G_1(X_0, X_1), G_2(X_1, X_2),\ldots, G_i(X_{i-1}, X_i), \ldots$. Let $X_0=\{b\}$, $X_1=\{x:xa,bx \in A(D)\}$ and $A(G_1)=\{ bx :x\in X_1\}$. Note that $|X_1| \geq (1/3 +2\gamma)n$. Further, let
\begin{equation*}
\begin{split}
&X_2^\prime=\{y:\ \exists\ x \in X_1 \ \text{such that}\ xy,yb \in A(D)\},\\
&A(G_2^\prime)=\{ xy:\ x\in X_1, y\in X_2^\prime \ \text{and}\ xy,yb \in A(D)\}\ \mbox{and} \\
&G_2^\prime:=G_2^\prime(X_1,X_2^\prime).
\end{split}
\end{equation*}
Then for every $xy \in G_2^\prime$ and $y\neq a$, there exists a 4-path $abxy$ in $D$. Furthermore, $d^+_{G_2^\prime}(x) \geq (1/3 +2\gamma)n$ for each $x \in X_1$. Let
\begin{center}
$X_2^0=\{ y \in X_2^\prime : d^-_{G_2^\prime}(y) < \sqrt{n} \}$, $X_2=X_2^\prime \setminus X_2^0$ and $G_2= G_2^\prime[X_1 \cup X_2]$.
\end{center}
Note that $(1/3 +2\gamma)n|X_1| \leq |A(G_2^\prime)| \leq |X_1||X_2|+\sqrt{n}|X_2^0| \leq n^{\underline{}\frac{3}{2}} +|X_1||X_2|$. This implies that $|X_2| \geq n/3$.

Next, assume that we already have constructed $X_0,X_1,\ldots,X_j$ and $G_1,\ldots,G_j$ where $j \geq 2$. To construct $X_{j+1}$ and $G_{j+1}$, we consider an auxiliary bipartite digraph $B_y^j$ between the in-neighbours of $y$ in $G_j$ and all vertices of $V(D)$ for each $y \in X_j$, where $A(B_y^j)=\{zx : x\in N^-_{G_j}(y),z\in V(D)\ \text{and}\ zx,yz\in A(D)\}$. Define $G_{j+1}^\prime$ the bipartite digraph between $X_j$ and $X_{j+1}^\prime$, where
\begin{equation*}
\begin{split}
X_{j+1}^\prime&=\{z \in V(D) : \exists\ y \in X_j \ \text{such that}\ d_{B_y^j}^+(z) \geq n^{\frac{1}{4}}\},\  \mbox{and}\\
A(G_{j+1}^\prime)&=\{yz\in A(D) :  y \in X_j, \ z \in X_{j+1}^\prime \}.
\end{split}
\end{equation*}
Furthermore, let
\begin{equation*}
\begin{split}
&X_{j+1}^0=\{ z \in X_{j+1}^\prime : d^-_{G_{j+1}^\prime}(z) < \sqrt{n} \}, \mbox{and}\\
&X_{j+1}=X_{j+1}^\prime \setminus X_{j+1}^0, G_{j+1}= G_{j+1}^\prime[X_j \cup X_{j+1}].
\end{split}
\end{equation*}
Notice that some of the sets $X_0,X_1,X_2,\ldots,X_{j+1}$ may intersect. Nevertheless, for the sake of our construction we treat them as disjoint by cloning the vertices as much as necessary. We call the structure consisting of the sets $X_0,X_1,X_2\ldots$ and the bipartite digraphs $G_1,G_2\ldots$, an \emph{$ab$-out-cascade}. We have to alter our construction for $j \geq 3$ and require $d_{B_y^j}^-(z) \geq n^{1/4}$ to ensure that we can obtain a legitimate reverse square path from any edge of $G_j$ going back to $ab$ as long as $j<n^{1/4}$, on which all vertices are distinct.

A vertex $y\in X_j$ is called \emph{heavy} if $d^-_{G_j}(y) \geq (1/3+\gamma)n$. We show that the following conclusion holds.

\begin{claim} \label{conn1} There exists an integer $j \leq j_0=\lceil \frac{1}{\gamma}\rceil +1$ such that $X_j$ contains at least one heavy vertex. \end{claim}

\begin{proof}
First, we show that for $j \geq 2$ and for every $y \in X_j$ the out-degree of $y$ in $G_{j+1}^\prime$ is at least $(1/3+2\gamma)n-n^{\frac{3}{4}}$. Let $s$ be the number of vertices $z \in V(D)$ with $d^+_{B_y^j}(z) <n^{\frac{1}{4}}$. Then $sn^{\frac{1}{4}}+(n-s)|N_{G_j}(y)| \geq |A(B_y^j)| \geq |N_{G_j}(y)|(1/3+2\gamma)n.$ Since $|N_{G_j}(y)|= d^-_{G_j}(y) \geq \sqrt{n}$ and $s \leq n$, we obtain $$n-s \geq (1/3+2\gamma)n-\frac{sn^{\frac{1}{4}}}{|N_{G_j}(y)|} \geq (1/3+2\gamma)n-n^{\frac{3}{4}}. $$
Thus $d^+_{G_{j+1}^\prime}(y) \geq (1/3+2\gamma)n-n^{3/4}$. Note that the total number of arcs of $G^\prime_{j+1}$ incident to the vertices of $X_{j+1}^0$ is smaller than $n^{3/2}$.

To the contrary, we suppose that the vertex set $X_j$ does not contain any heavy vertex for each $j=2,\ldots,j_0$. So $|A(G_j)| < |X_j|(1/3+\gamma)n$. On the other hand, we have
$$|A(G_j)| \geq |X_{j-1}|[(1/3+2\gamma)n-n^{\frac{3}{4}}]-n^{\frac{3}{2}}.$$
Since $|X_1| \geq (1/3+2\gamma)n$, and by the fact that $(1-x)e^x \leq 1$ with $x= \frac{3\gamma}{1+6\gamma}$, we have that
$$|X_{j_0}|  \geq \frac{|A(G_{j_0})| }{(1/3+\gamma)n} \geq \frac{1+6\gamma}{1+3\gamma}|X_{j_0-1}|-O(n^{\frac{3}{4}}) > (\frac{1+6\gamma}{1+3\gamma})^{j_0-1} \frac{n}{3} > e^{\frac{3}{1+6\gamma}}\frac{n}{3}>n,$$
a contradiction.
\end{proof}

Analogously, we may consider the in-neighbours of $c$ and the out-neighbours of $d$ to build a cascade structure, defined \emph{$cd$-in-cascade.} We also obtain that, in the in-cascade ($X_j,G_j$), there exists an integer $j \leq j_0=\lceil \frac{1}{\gamma}\rceil +1$ such that $X_j$ contains at least one vertex with $d^+_{G_j}(y) \geq (1/3+\gamma)n$.

For the given two arcs $ab$ and $cd$, we consider the $ab$-out-cascade ($X_j^{(1)},G_j^{(1)}$) and $cd$-in-cascade ($X_j^{(2)},G_j^{(2)}$). For $i=1,2$, let $b_i \in X_{j_i}^{(i)}$ be a heavy vertex in the corresponding cascade, where $j_i^{(i)}\leq j_0$.

If $b_1=b_2:=b$, then $b$ has at least $(1/3+\gamma)n$ out-neighbours in $X_{j_2-1}^{(2)}$ by the definition of the heavy vertex. Since $\delta^0 (D) \geq (2/3+\gamma)n$, there exist two vertices $a_1 \in X_{j_1-1}^{(1)}$ and $a_2 \in X_{j_2-1}^{(2)}$ such that $a_1b,ba_2, a_2a_1\in A(D)$. When $b_1\neq b_2$, we have $|N^-(b_1)\cap N^+(b_2)| \geq (1/3+2\gamma)n$. Choose $w$ be a vertex in $N^-(b_1)\cap N^+(b_2)$. It is easy to see that there are $u_1 \in N^+(b_1)\cap N^-(w)$ and $u_2 \in N^+(w)\cap N^-(b_2)$ such that $u_2u_1 \in A(D)$. Clearly, $u_1$ has at least $\geq 2\gamma n$ out-neighbours in $N^-(b_1) \cup X_{j_1-1}^{(1)}$ and $u_2$ has at least $\geq 2\gamma n$ in-neighbours in $N^+(b_2) \cup X_{j_2-1}^{(2)}$.

By the definition of the $ab$-out-cascade, there is a $(j_1 + 3)$-path $P_1$ connecting $ab$ and $b_1u_1$ and, by the definition of the $cd$-in-cascade, there is a $(j_2 + 3)$-path $P_2$, which is disjoint with $P_1$, connecting $u_2b_2$ and $cd$. Then $P_1 \circ u \circ P_2$ is a $k$-path connecting $ab$ and $cd$ with $k=(j_1 + 3)+(j_2 + 3)+1 \leq 2(j_0 + 4) \leq 4/\gamma$ due to $\gamma \leq 1/6$.
\end{proof}

The absorbing lemma asserts that there is one `reasonably sized' reverse square path that possesses the property that any `reasonably sized' subset of vertices can be absorbed into it which constructs a longer path with the same end-vertices. For each $v \in V(D)$, we use $\mathcal{A}_v$ to represent the set of all absorbers of $v$.

\begin{lemma} \label{absorbing} {\rm ( }Absorbing Lemma{\rm )} There is an $l$-path $P_A$ in $D$ with $l\leq 20\gamma^3n$ such that for every subset $U \subset V(D)\setminus V(P_A)$ of cardinality at most $\gamma^7n$ there is a path $P_{AU}$ in $D$ satisfying $V(P_{AU})=V(P_A) \cup U$ and $P_{AU}$ has the same end-arcs as $P_A$.
\end{lemma}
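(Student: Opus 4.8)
The plan is to use the absorption method in its now-standard form: build a large collection of absorbers, show that most vertices have many absorbers available, select a random subcollection that is both small and "robustly covers" every vertex, and then stitch the chosen absorbers together into a single reverse square path using the Connecting Lemma (Lemma~\ref{connnect}).

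\textbf{Step 1: Counting absorbers.} First I would show that every vertex $v\in V(D)$ has many absorbers, say $|\mathcal{A}_v|\geq c\,n^4$ for some $c=c(\gamma)>0$. Recall an absorber of $v$ is a reverse square $4$-path $abcd$ with $ca,db\in A(D)$ and additionally $a,c\in N^+(v)$, $b,d\in N^-(v)$, $cb\in A(D)$. Since $\delta^0(D)\geq(2/3+\gamma)n$, the sets $N^+(v)$ and $N^-(v)$ each have size at least $(2/3+\gamma)n$, so any two of them intersect in at least $(1/3+2\gamma)n$ vertices. I would greedily pick the vertices in the order $c$, then $b$, then $d$, then $a$: choose $c\in N^+(v)$; choose $b\in N^-(v)\cap N^+(c)$ (size $\geq(1/3+2\gamma)n$); choose $d\in N^-(v)\cap N^+(b)$; choose $a\in N^+(v)\cap N^-(d)$ with the extra requirements $ab\in A(D)$, $ca\in A(D)$ — each of these is an intersection of at least three "big" neighbourhoods, hence has size $\Omega(\gamma n)$ at each step once $n$ is large. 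This yields $|\mathcal{A}_v|\geq c_0 n^4$ with $c_0=c_0(\gamma)>0$, and moreover any fixed vertex lies in at most $O(n^3)$ absorbers of $v$ (it occupies one of four positions).

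\textbf{Step 2: Random selection of an absorbing family.} Next I would choose a random subset $\mathcal{F}\subseteq\bigcup_v\mathcal{A}_v$ by including each absorber independently with probability $p=\gamma^{10}n^{-3}$ (constants to be tuned). By linearity of expectation $\mathbb{E}|\mathcal{F}|=O(\gamma^{10}n)$, and $\mathbb{E}(\#\text{intersecting pairs in }\mathcal{F})=O(n^7)p^2=O(\gamma^{20}n)$; by Lemma~\ref{Chernoff}(1) and Markov's inequality (Lemma~\ref{Chernoff}(2)) we get, with positive probability, $|\mathcal{F}|\leq 2\gamma^{10}n$, the number of intersecting pairs is $\leq\gamma^{19}n$, and for every $v$ the number of absorbers of $v$ inside $\mathcal{F}$ is at least $\tfrac12 c_0 n^4 p=\tfrac12 c_0\gamma^{10}n\geq \gamma^{11}n$ (again Chernoff, union bound over $n$ vertices). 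Delete one absorber from each intersecting pair and each absorber that happens to meet the two fixed arcs we will want to connect; this removes at most $\gamma^{19}n+O(1)$ absorbers, leaving a family $\mathcal{F}'$ of pairwise disjoint reverse square $4$-paths with $|\mathcal{F}'|\leq 2\gamma^{10}n$ and, crucially, every $v\in V(D)$ still has at least $\gamma^{11}n-\gamma^{19}n\geq \gamma^{12}n$ of its absorbers lying disjointly in $\mathcal{F}'$.

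\textbf{Step 3: Connecting the absorbers into one path.} Now I would chain the absorbers of $\mathcal{F}'$ together. Enumerate them $F_1,\dots,F_t$ with $t\leq 2\gamma^{10}n$. Using the Connecting Lemma repeatedly, connect the last end-arc of $F_i$ to the first end-arc of $F_{i+1}$ by a reverse square $k_i$-path with $k_i\leq 4/\gamma$; to keep all these connectors internally disjoint and disjoint from all $F_j$, at each step I apply the Connecting Lemma inside the digraph $D$ restricted to the not-yet-used vertices — since at every stage we have used at most $t\cdot(4+4/\gamma)=O(\gamma^9 n)$ vertices, the minimum semi-degree of the remaining digraph is still at least $(2/3+\gamma/2)n$ on $\geq(1-\gamma^8)n$ vertices, so Lemma~\ref{connnect} (with $\gamma$ replaced by $\gamma/2$, still $\ll1/6$) applies. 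Concatenating with the $\circ$ operation gives a single reverse square path $P_A$ whose vertex set is $\bigcup_i V(F_i)$ together with the connector vertices; its order is $l\leq 4t + t\cdot(4/\gamma)\leq 2\gamma^{10}n(4+4/\gamma)\leq 20\gamma^3 n$ for small $\gamma$. The end-arcs of $P_A$ are the first end-arc of $F_1$ and the last end-arc of $F_t$, which are fixed once and for all.

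\textbf{Step 4: Absorbing property.} Finally, given any $U\subseteq V(D)\setminus V(P_A)$ with $|U|\leq\gamma^7 n$, I would absorb its vertices one at a time. Write $U=\{u_1,\dots,u_r\}$ with $r\leq\gamma^7 n$. For $u_1$: among its $\geq\gamma^{12}n$ disjoint absorbers sitting in $\mathcal{F}'$ (hence as $4$-subpaths $abcd$ of $P_A$), at most $r\leq\gamma^7 n$ have been "spent" on earlier vertices — but $\gamma^{12}n$ may be smaller than $\gamma^7 n$, so I must be slightly more careful: I should instead have guaranteed in Step~2 that each $v$ has $\geq\gamma^6 n$ absorbers in $\mathcal{F}'$ (achievable by taking $p$ a bit larger, e.g. $p=\gamma^5 n^{-3}$, which only inflates $l$ by a fixed power of $\gamma$ and still keeps $l\leq 20\gamma^3 n$ — this is exactly the kind of bookkeeping the statement's generous constant $20\gamma^3$ leaves room for). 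Then at each step $i\leq r$ at least $\gamma^6 n-\gamma^7 n\geq \tfrac12\gamma^6 n$ absorbers of $u_i$ in $\mathcal{F}'$ are still untouched; pick one such $abcd$, which appears consecutively in $P_A$, and replace the segment $abcd$ by $abu_icd$ — this is legitimate because $abcd$ is an absorber of $u_i$, so $au_i,u_ic,cu_i?$... precisely $a,c\in N^+(u_i)$, $b,d\in N^-(u_i)$ and $cb\in A(D)$, which is exactly what makes $abu_icd$ a reverse square $5$-path while leaving all reverse-square adjacencies with the rest of $P_A$ intact (the modification is local and the end-arcs of the whole path are unchanged). After processing all of $U$ we obtain $P_{AU}$ with $V(P_{AU})=V(P_A)\cup U$ and the same end-arcs as $P_A$, as required.

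The main obstacle is Step~2/Step~4 together: making the numerology consistent so that (a) the absorbing path $P_A$ is short enough ($l\leq 20\gamma^3 n$), while (b) every vertex retains enough disjoint absorbers inside $P_A$ to survive the absorption of an arbitrary set $U$ of size up to $\gamma^7 n$. This forces the guaranteed number of absorbers-per-vertex in $\mathcal{F}'$ to exceed $\gamma^7 n$, which in turn pins down the selection probability $p$ and hence the size of $P_A$; checking that all these polynomial-in-$\gamma$ constants fit under $20\gamma^3$ simultaneously with the disjointness clean-up is the delicate part. A secondary technical point is ensuring the connectors produced in Step~3 respect all the reverse-square adjacency constraints at the gluing arcs — this is handled by the Connecting Lemma as stated (it connects arcs by reverse square paths), but one must be careful that "connecting the arc $e_1$ to the arc $e_2$" produces a reverse square path of the form $e_1\cdots e_2$ so that $\circ$-concatenation is valid, which is precisely the form asserted in Lemma~\ref{connnect}.
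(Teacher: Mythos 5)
Your overall strategy (count absorbers, select a random disjoint family covering every vertex robustly, chain them with the Connecting Lemma, then absorb one vertex per unused absorber) is exactly the paper's approach. But your numerology does not close, and this is not a cosmetic issue — it is precisely the tension you flag at the end.

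With the greedy count in your Step~1, $|\mathcal{A}_v|=\Theta(\gamma^3 n^4)$: three of the four choices lie in intersections of three large neighbourhoods, giving factors $\Theta(\gamma n)$, and the paper's version of this count yields $|\mathcal{A}_v|\geq 6\gamma^3 n^4$. Given that, your proposed selection probability $p=\gamma^5 n^{-3}$ produces an expected $|\mathcal{A}_v\cap\mathcal{F}'|\approx 6\gamma^8 n$, which is \emph{smaller} than $\gamma^7 n$ for all admissible $\gamma<1/6$, not ``$\geq\gamma^6 n$'' as you assert. So after absorbing a few $U$'s of size close to $\gamma^7 n$ you would run out of fresh absorbers for some vertex. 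The minimal selection probability that works is $p=\Theta(\gamma^4 n^{-3})$ (the paper uses $p=\gamma^4 n^{-3}$): this gives, after concentration, at least $4\gamma^7 n$ absorbers of each $v$ in $\mathcal{F}'$, and after discarding $O(\gamma^8 n)$ for disjointness, still $>\gamma^7 n$. One then has $|\mathcal{F}|\leq 2\gamma^4 n$ and, connecting consecutive absorbers by paths of order at most $8/\gamma$ (Lemma~\ref{connnect} applied with $\gamma/2$, as the ambient semi-degree degrades), $l\leq 2\gamma^4 n\cdot(4+8/\gamma)=8\gamma^4 n+16\gamma^3 n<20\gamma^3 n$. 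So the bound $20\gamma^3 n$ is not ``generous''; it is tight against $16\gamma^3 n$, and you need the specific constants $6$ (from Step~1) and $2$ (from $|\mathcal{F}|\leq 2\gamma^4 n$) to make both ends meet simultaneously. Your argument leaves this unchecked and in fact fails as written.

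A secondary, more local issue: in Step~1 your greedy choice of $b,d$ records the wrong constraints. Being an absorber requires $b\in N^+(c)\cap N^-(c)$ (you need both $bc$ for the reverse square $4$-path $abcd$ and $cb$ for the reverse square $5$-path $abvcd$) — you only wrote $b\in N^+(c)$; and you need $d\in N^-(b)\cap N^+(c)$ (from $db$ and $cd$) — you wrote $d\in N^+(b)$, and there is no constraint tying $a$ to $d$, so ``$a\in N^-(d)$'' is spurious. These do not affect the $\Theta(\gamma^3 n^4)$ count, but they should be corrected before it can support the quantitative selection step above.
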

\begin{proof}
We first show the following several claims.
\begin{claim} \label{ab1} For every $v \in V(D)$ there are at least $6 \gamma^3n^4$ absorbers of $v$. \end{claim}
\begin{proof}
By the lower bound of $\delta^0(D)$, we have $|N_D^-(v)|\geq(2/3+\gamma)n$. For each $b\in N_D^-(v)$, clearly $|N_D^+(v)\cap N_D^+(b)|\geq2(2/3+\gamma)n-n=(1/3+2\gamma)n$. Since $d^-_D(b) \geq (2/3 +\gamma)n$, we get that $|N^-_D(b) \cap N^+_D(b)\cap N^+_D(v)|\geq 3\gamma n$. Let $c$ be any vertex in $N^-_D(b) \cap N^+_D(b)\cap N^+_D(v)$. Similarly, by the lower bound of $\delta^0(D)$ again, we have that $|N^+_D(c)\cap N^+_D(v)|, |N^-_D(b)\cap N^-_D(v)|\geq(1/3+2\gamma)n$. Let $a$ be any vertex in $N^-(b)\cap N^+(c)\cap N^+(v)$, and finally let $d$ be an arbitrary out-neighbour of $c$ in $N^-(b)\cap N^-(v)$ that is different from $a$. Then $abcd$ is a $4$-path that absorbs $v$. Hence the number of absorbers of $v$ is at least $(2/3+\gamma)n\cdot 3\gamma n  \cdot 3\gamma n \cdot3\gamma n \geq 6 \gamma^3n^4$, which proves the claim.
\end{proof}
\begin{claim} \label{ab2} There exists a family $\mathcal{F}$ of at most $2\gamma^4n$ disjoint absorbers of vertices of $D$ such that for every $v \in V(D)$, $|\mathcal{A}_v \cap \mathcal{F}| > \gamma^7n$. \end{claim}
\begin{proof}
We first select a family $\mathcal{F}^\prime$ of $4$-sets at random by including each of $n(n-1)(n-2)(n-3)\sim n^4$ $4$-sets independently with probability $\gamma^4 n^{-3}$ (some of the selected $4$-sets may not be absorbers at all). Then we affirm the following conclusions.

\emph{$(1)$ With probability $1-o(1)$, as $n\rightarrow\infty$, $|\mathcal{F}^\prime|<2\gamma^4n$ and $|\mathcal{A}_{uv}\cap\mathcal{F}^\prime|>4\gamma^7n$ for\\ $~~~~~~~~~~$ every vertex $v$ in $V(D)$.}

\emph{$(2)$ With probability at least $1/17$, as $n\rightarrow\infty$, there are at most $17\gamma^8n$ pairs of\\ $~~~~~~~~~~$ overlapping $4$-sets in $\mathcal{F}^\prime$.}

\smallskip

Conclusion $(1)$ can be obtained directly by using Chernoff's inequality. We further give the proof of $(2)$. Clearly, the expected number of intersecting pairs of $4$-sets in $\mathcal{F}^\prime$ is at most
\begin{equation*}
\begin{split}
n^4\times4\times4\times n^3\times(\gamma^4 n^{-3})^2=16\gamma^8n.
\end{split}
\end{equation*}
Let $X$ be the number of intersecting pairs of $4$-sets in $\mathcal{F}^\prime$. By Markov's inequality with $a=17\gamma^8n$, we can get that $\mathbb{P}(X\geq 17\gamma^8n)\leq\frac{\mathbb{E}X}{a}=\frac{16\gamma^8n}{17\gamma^8n}=16/17$. This implies that $(2)$ holds.

Hence, by $(1)$-$(2)$, there exists a random family satisfying properties $(1)$ and $(2)$ above with positive probability. For simplicity, we denote this family by $\mathcal{F}^{\prime}$. From $\mathcal{F}^{\prime}$ we obtain a subfamily $\mathcal{F}$ by deleting all $4$-sets that overlap other $4$-sets and all $4$-sets that are not absorbers at all. Then $\mathcal{F}$ consists of disjoint absorbers such that for each $v \in V(D)$ we have
\begin{equation*}
\begin{split}
|\mathcal{A}_v \cap \mathcal{F}| > 4 \gamma^7n-34\gamma^8n >\gamma^7n.
\end{split}
\end{equation*}\end{proof}

Let $f=|\mathcal{F}|$ and let $F_1, \ldots, F_f$ be the elements of $\mathcal{F}$, where $\mathcal{F}$ is the family as described in Claim \ref{ab2}. Since $F_i$ is an absorber, $F_i$ spans a $4$-path for every $i\in[f]$. In the following, we also use $F_i$ to represent the $4$-path. Then we will connect all these $4$-paths into one not too long path $P_A$. For this purpose, we will repeatedly apply Lemma \ref{connnect} to connect the last end-arc of $F_i$ and the first end-arc of $F_{i+1}$ by a short path for each $i\in[f-1]$.
\begin{claim} \label{ab3} There exists a path $P_A$ in $D$ of the form $P_A=F_1\circ P_1 \circ\cdots \circ F_{f-1} \circ P_{f-1} \circ F_f$, where each of the paths $P_1,\ldots, P_{f-1}$ has at most $8/\gamma$ vertices. \end{claim}
\begin{proof}
We show that, for each $i = 1,\ldots, f$, there exists a path $L_i$ in $D$ of the form $L_1 = F_1$, and $L_i=F_1\circ P_1 \circ \cdots \circ F_{i-1} \circ P_{i-1} \circ F_i$ for $i \geq 2$, where each of the paths $P_1,\ldots, P_{i-1}$ has at most $8/\gamma$ vertices.

There is nothing to prove for $i=1$. Assume the statement is true for some $1\leq i\leq f-1$. Let $ab$ be the last end-arc of $L_i$ and let $cd$ be the first end-arc of $F_{i+1}$. Denote $V_i=(V\setminus V(F\cup P_i)) \cup \{a,b,c,d\}$ and $D_i$ be the subdigraph of $D$ induced by $V_i$. Note that $|V(F\cup P_i)| < f(4+8/\gamma)< 20\gamma^3n$. We obtain that $\delta^0(D_i) \geq (2/3+\gamma/2)|D_i|$. Applying Lemma \ref{connnect} in $D_i$ on $ab$ and $cd$, we get a  path $P_i \subset D_i$ of length at most $8/\gamma$, with $ab$ is the first end-arc and $cd$ is the last end-arc. Then we obtain a path of the form $L_{i+1}= L_i \circ P_i \circ F_{i+1}$. Thus $P_A:=L_f$ is the desired path.
\end{proof}
By Claim \ref{ab3}, there exists a  path $P_A$ in $D$ of the form $P_A=F_1\circ P_1 \circ\cdots \circ F_{f-1} \circ P_{f-1} \circ F_f$.  Obviously, $|P_A| <20\gamma^3n$. Notice that $P_A$ contains all absorbers in $\mathcal{F}$ and for every $v \in V(D)$, $|\mathcal{A}_v \cap \mathcal{F}| > \gamma^7n$. This implies that for any vertex set $U\subset V(D)\setminus V(P_A)$ of size at most $\gamma^7n$, we can insert all vertices of $U$ into $P_A$ one by one, each time using a new absorbing $4$-path. This completes the proof of the lemma.
\end{proof}

\begin{lemma} \label{reservior} {\rm(}Reservoir Lemma{\rm)} Let $D$ be the digraph described as in Theorem \ref{main}, and let $\gamma$ be any real with $\gamma\ll1$. For every subset $W \subset V(D)$, $|W| \leq \gamma n/4$, there exists a subset $R \subset V(D) \setminus W$ {\rm(}called a reservoir{\rm)} such that $|R| = \lceil \gamma^7n/2\rceil$ and for every $x \in V(D)$, $$d^+_R(x)\geq (2/3 + \gamma/2)(|R|+4)\ \text{ and }\  d^-_R(x) \geq (2/3 + \gamma/2)(|R|+4).$$
\end{lemma}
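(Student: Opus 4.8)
The plan is to obtain $R$ by a first‑moment random sampling argument and then correct its size, which is the standard route for such reservoir lemmas. Fix $W\subset V(D)$ with $|W|\le \gamma n/4$, and set $n':=|V(D)\setminus W|=n-|W|\ge (1-\gamma/4)n$ and $m:=\lceil \gamma^7 n/2\rceil$. Since $\delta^0(D)\ge (2/3+\gamma)n$ and $|W|\le \gamma n/4$, every $x\in V(D)$ has at least $(2/3+\gamma)n-\gamma n/4=(2/3+3\gamma/4)n$ out‑neighbours in $V(D)\setminus W$, and likewise at least $(2/3+3\gamma/4)n$ in‑neighbours there. First I would form a random set $R'\subseteq V(D)\setminus W$ by including each vertex of $V(D)\setminus W$ independently with probability $p:=m/n'\ (<1)$; then $|R'|$, $d^+_{R'}(x)$ and $d^-_{R'}(x)$ are each sums of independent Bernoulli$(p)$ indicators, with $\mathbb{E}|R'|=m$ and, for every $x$, $\mathbb{E}[d^+_{R'}(x)]=p\cdot|N^+(x)\cap(V(D)\setminus W)|\ge (2/3+3\gamma/4)m$ (using $n'\le n$), and the same bound for $\mathbb{E}[d^-_{R'}(x)]$.

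Next I would apply Chernoff's inequality (Lemma \ref{Chernoff}$(1)$) with parameter $a=\gamma/20<3/2$ to each of the $2n+1$ random variables $|R'|$, $d^+_{R'}(x)$, $d^-_{R'}(x)$ for $x\in V(D)$. Each relevant expectation is $\Theta(\gamma^7 n)$, so each failure probability is at most $2e^{-(a^2/3)\Theta(\gamma^7 n)}=2e^{-\Theta(\gamma^9 n)}$, and a union bound over all of them is $o(1)$ as $n\to\infty$. Hence for $n$ large there is an outcome with $(1-\gamma/20)m\le |R'|\le (1+\gamma/20)m$ and, for every $x\in V(D)$, both $d^+_{R'}(x)$ and $d^-_{R'}(x)$ at least $(1-\gamma/20)(2/3+3\gamma/4)m\ge (2/3+7\gamma/10)m$ (the last inequality holding for all small $\gamma$). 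Fix such an $R'$.

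Finally I would correct the size. If $|R'|>m$, delete $|R'|-m\le (\gamma/20)m$ vertices of $R'$ arbitrarily; if $|R'|<m$, add $m-|R'|\le (\gamma/20)m$ vertices of $V(D)\setminus(W\cup R')$ arbitrarily (possible since $n'-m$ is large). This yields $R\subset V(D)\setminus W$ with $|R|=m=\lceil \gamma^7 n/2\rceil$. Adding vertices can only increase $d^{\pm}_R(x)$, while deleting $t\le (\gamma/20)m$ vertices decreases each $d^{\pm}(x)$ by at most $t$; hence $d^+_R(x),d^-_R(x)\ge (2/3+7\gamma/10)m-(\gamma/20)m=(2/3+13\gamma/20)m$ for every $x\in V(D)$. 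Since $(2/3+13\gamma/20)m-(2/3+\gamma/2)(m+4)=(3\gamma/20)m-(2/3+\gamma/2)\cdot 4>0$ once $m\ge 80/(3\gamma)$, i.e.\ for all $n\ge n_0(\gamma)$, this $R$ has the required property.

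There is no genuine obstacle here: this is a routine Chernoff/deletion argument. The only point needing care is the bookkeeping of the slack. The expected degree into $R'$ exceeds the target $(2/3+\gamma/2)|R|$ by a linear‑in‑$n$ margin (about $\gamma m/4$), whereas the Chernoff fluctuation, the size correction, and the additive constant $4$ together cost only $O(\gamma m/20)+O(1)$; choosing the Chernoff parameter $a=\gamma/20$ (small relative to the $\gamma/4$ surplus) and taking $n\ge n_0(\gamma)$ ensures a positive surplus, namely $(3\gamma/20)m$, survives to the end.
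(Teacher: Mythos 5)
Your proposal is correct and follows essentially the same route as the paper: a random reservoir with linear size, Chernoff concentration of in- and out-degrees, and a union bound over vertices. The only technical variation is that the paper samples $R$ uniformly from all $r$-subsets of $V(D)\setminus W$ (so the degree counts are hypergeometric and no size correction is needed), whereas you use independent Bernoulli inclusion and then add or delete $O(\gamma m)$ vertices to fix $|R|=m$; both are standard and your bookkeeping of the resulting slack is sound.
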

\begin{proof}
Let $r=\lceil \gamma^7n/2\rceil$.  We choose $R$ randomly out of all $\tbinom{n-|W|}{r}$ possibilities and apply the probabilistic method again. For each vertex $v$, the random variable $X^+_v$, counting the out-neighbours of $v$ in $R$, has the hypergeometric distribution with expectation $\mathbb{E}X^+_v$ satisfying
$$r \geq \mathbb{E}X^+_v \geq \frac{d^+(v)- |W|}{n -|W|} r \geq (\frac{2}{3} + \frac{3}{4}\gamma) r.$$
By Chernoff's bound, we have
$$\mathbb{P}(X^+_v < (\frac{2}{3}+\frac{1}{2}\gamma)(r+4))\leq \mathbb{P}(X^+_v\leq \mathbb{E}X^+_v-\frac{1}{4}\gamma r+2)\leq \exp\left(-\frac{\gamma^2 r}{33}\right).$$

Let $X^-_v$ be the random variable counting the vertices of $R$ which are in-neighbours of $v$. Analogously, we can get that
$$\mathbb{P}(X^-_v < (\frac{2}{3}+\frac{1}{2}\gamma)(r+4))\leq \mathbb{P}(X^-_v\leq \mathbb{E}X^-_v-\frac{1}{4}\gamma r+2)\leq \exp\left(-\frac{\gamma^2 r}{33}\right).$$
Let $A$ be the event that there exists a vertex $v\in V(D)$ such that $X^+_v < (\frac{2}{3}+\frac{1}{2}\gamma)(r+4)$ or $X^-_v < (\frac{2}{3}+\frac{1}{2}\gamma)(r+4)$.
We obtain that
$$\mathbb{P}(A)\leq \text 2n\cdot \exp\left(-\frac{\gamma^2 r}{33}\right)=2n\cdot \exp\left(-\frac{\gamma^9 n}{66}\right)=o(1).$$
Hence for sufficiently large $n$, the event $A$ does not occur for R with high probability. Then we can fix a choice of $R$ that meets the conditions.
\end{proof}

\begin{lemma} \label{Rconnect} {\rm(}Reservoir-Connecting Lemma{\rm)} Let $D$ be the digraph shown as in Theorem \ref{main}, and let $\gamma$ be any real with $\gamma\ll1$. For every two disjoint arc of $D$, $ab$ and $cd$, there is a $k$-path in $D[R \cup \{a, b, c, d\}]$ connecting $ab$ and $cd$ with $k \leq \frac{16}{\gamma}$. Furthermore, this statement remains true even if at most $\gamma^8n$ vertices of $R$ are forbidden to be used on this connecting path.\end{lemma}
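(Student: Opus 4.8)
The plan is to mimic the proof of the Connecting Lemma (Lemma~\ref{connnect}), but to carry out the entire cascade construction inside the digraph $D[R\cup\{a,b,c,d\}]$ rather than in $D$ itself. The crucial observation is that the Reservoir Lemma (Lemma~\ref{reservior}) guarantees exactly what is needed for this: for every $x\in V(D)$ we have $d^+_R(x),d^-_R(x)\ge(2/3+\gamma/2)(|R|+4)$, so the induced subdigraph $D'=D[R\cup\{a,b,c,d\}]$ on $r+4$ vertices has $\delta^0(D')\ge(2/3+\gamma/2)|D'|$. Thus $D'$ is itself a digraph of the type covered by Theorem~\ref{main} (with $\gamma$ replaced by $\gamma/2$), and $ab,cd$ are two disjoint arcs of $D'$. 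Applying Lemma~\ref{connnect} to $D'$ directly produces a $k$-path connecting $ab$ and $cd$ with $k\le 4/(\gamma/2)=8/\gamma\le 16/\gamma$, and this path lies entirely in $D[R\cup\{a,b,c,d\}]$. So the first and main part of the statement is essentially immediate from the two lemmas already proved.

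For the ``furthermore'' part, suppose a set $Z\subseteq R$ with $|Z|\le\gamma^8 n$ is forbidden. Then I would work instead in $D''=D[(R\setminus Z)\cup\{a,b,c,d\}]$. The point is that deleting at most $\gamma^8 n$ vertices from $R$ changes degrees into $R$ by at most $\gamma^8 n$, which is negligible compared with the slack in the degree bound: since $|R|=\lceil\gamma^7 n/2\rceil$, we have $\gamma^8 n = o(|R|)$, and more precisely $\gamma^8 n\le(\gamma/8)|R|$ for small $\gamma$. Hence for every $x$,
\[
d^+_{R\setminus Z}(x)\ \ge\ d^+_R(x)-|Z|\ \ge\ (2/3+\gamma/2)(|R|+4)-\gamma^8 n\ \ge\ (2/3+\gamma/4)|D''|,
\]
and likewise for in-degrees. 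Thus $D''$ still satisfies $\delta^0(D'')\ge(2/3+\gamma/4)|D''|$, so Lemma~\ref{connnect} applies to $D''$ and yields a connecting $k$-path with $k\le 4/(\gamma/4)=16/\gamma$, avoiding $Z$ entirely. This is exactly the claimed bound.

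The only points requiring care — and the closest thing to an obstacle — are bookkeeping ones: first, checking that the arithmetic genuinely gives $\delta^0\ge(2/3+\text{const}\cdot\gamma)|D''|$ after the two successive losses (restricting to $R$, then deleting $Z$), which forces the choice of the constant $\gamma/4$ and is where the hypotheses $|W|\le\gamma n/4$ and $|Z|\le\gamma^8 n$ get used; and second, noting that $a,b,c,d$ themselves may or may not lie in $R$, but in either case $\{a,b,c,d\}$ has bounded size so adding them to the vertex set perturbs all relevant quantities by $O(1)$, absorbed into the $+4$ already present in the statement of Lemma~\ref{reservior}. One should also remark that $n$ is taken large enough (depending on $\gamma$) that $n\ge n_0(\gamma/4)$ from Lemma~\ref{connnect}, which is fine since $|D''|\ge|R|-\gamma^8 n\to\infty$. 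With these remarks in place the proof is a short reduction, so I would present it as: define $D''$, verify the semi-degree condition by the displayed inequality, invoke Lemma~\ref{connnect}, and read off the length bound.
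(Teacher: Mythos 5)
Your proof takes exactly the same route as the paper's: restrict to $D[R\cup\{a,b,c,d\}]$, use the Reservoir Lemma's semi-degree guarantee, and invoke the Connecting Lemma with a reduced parameter. However, the key arithmetic step in the ``furthermore'' part is wrong, and the same gap is present in the paper's own proof. You assert that $\gamma^8 n = o(|R|)$ and, more precisely, that $\gamma^8 n\le(\gamma/8)|R|$. Neither is true: since $|R|=\lceil\gamma^7 n/2\rceil$, we have $\gamma^8 n\approx 2\gamma|R|$, which is a fixed fraction of $|R|$ (not $o(|R|)$), and $(\gamma/8)|R|\approx\gamma^8 n/16<\gamma^8 n$, so your explicit inequality fails by a factor of $16$. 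The final step of your displayed chain, $(2/3+\gamma/2)(|R|+4)-\gamma^8 n\ge(2/3+\gamma/4)|D''|$, therefore does not hold, because $(\gamma/4)(|R|+4)\approx\gamma^8 n/8<\gamma^8 n$. In fact, after deleting $Z$, the guaranteed semi-degree can drop to roughly $(2/3-3\gamma/2)|R|$, and since $|D''|\ge(1-2\gamma)|R|$, the resulting semi-degree ratio is about $(2/3-\gamma/6)$, strictly below the $2/3$ threshold that the Connecting Lemma requires. (The paper's own line ``$\ge(2/3+\gamma')( |R|+4)$ with $\gamma'=\gamma/4$'' fails for the same reason.)

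The argument is salvaged by tightening the forbidden-set bound: if $|Z|\le\gamma^9 n$, then $|Z|/|R|\approx 2\gamma^2\ll\gamma/2$, the semi-degree of $D''$ stays above $(2/3+\gamma/4)|D''|$, and the rest of your reduction goes through, giving $k\le 16/\gamma$. This weaker version suffices for the application: in Section~3.2 the number of reservoir vertices forbidden at any stage is at most $16\gamma^9 n$. So the approach and the deduction structure are correct, but the tolerance $\gamma^8 n$ in the statement (and your verification of it) is too generous by a factor of roughly $\gamma$, and you should notice that the numerical claim $\gamma^8 n\le(\gamma/8)|R|$ cannot possibly hold once $|R|$ is substituted.
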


\begin{proof} Let $D_1=D[R \cup \{a, b, c, d\}]$. Observe that $\delta^0(D_1) \geq (2/3 + \gamma/2)(|R|+4)-\gamma^8n \geq (2/3 + \gamma^\prime)(|R|+4)$, where $\gamma^\prime=\gamma/4$. So in $D_1$, we have almost the same degree condition as in $D$ with $\gamma=\gamma^\prime$. Thus there is a $k$-path in $D_1$, $k \leq \frac{16}{\gamma}$, which connects $ab$ to $cd$.
\end{proof}
We aim to find a (reverse square) cycle $C$ in $D$ that contains $P_A$ (obtained in Lemma \ref{absorbing}) as a subpath and covers all but at most $\gamma^7n/2$ vertices of $V(D)$. To achieve this, we first establish the following Path-Covering Lemma.
\begin{lemma} \label{pathcover} {\rm(}Path-Covering Lemma{\rm)} For every real $\gamma > 0$, there exists $n_0$ such that every digraph $D$ on $n>n_0$ vertices with $\delta^0(D) \geq (2/3+\gamma)n$ contains a family of at most $\gamma^{10}n$ vertex-disjoint paths, covering all but at most $\gamma^7n/2$ vertices.
\end{lemma}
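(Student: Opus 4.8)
The plan is to run the Regularity--Blow-up machinery: reduce the problem to finding an almost perfect packing of directed \emph{cyclic} triangles in the reduced digraph of $D$, and blow each such triangle up into one spanning reverse square path. Fix constants $1/n_0 \ll 1/M' \ll \epsilon \ll d \ll \gamma$, with $\epsilon$ additionally small enough in terms of $d$ that $\sqrt{\epsilon} \le \eta_0(d-8\epsilon,4,3)$, where $\eta_0$ is the function from Lemma \ref{blowup}. Apply the Diregularity Lemma (Lemma \ref{regular}) to $D$ with parameters $\epsilon, d, M'$; this produces an exceptional set $V_0$ with $|V_0|\le\epsilon n$, clusters $V_1,\dots,V_k$ of common size $m$ with $M'\le k\le M$, the pure digraph $D'$, and the reduced digraph $R$ on $[k]$, which by \eqref{degree-R} satisfies $\delta^0(R)\ge(\delta^0(D)/n-d-2\epsilon)k\ge\tfrac{2}{3}k$.

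Since $\delta^0(R)\ge 2k/3$, Corollary \ref{triangle} applied to $R$ gives $\lfloor k/3\rfloor-1$ vertex-disjoint directed cyclic triangles (and one transitive triangle, which I discard); these cyclic triangles miss all but at most five clusters of $R$. Let $R'$ be the oriented subdigraph of $R$ formed by these cyclic triangles, so $\Delta(R')\le 2$ and I may take $l=2$ in Lemma \ref{superregular}. First deleting from $G^*$ every inter-cluster edge whose orientation disagrees with the cyclic orientation of the triangle of $R'$ containing it, Lemma \ref{superregular} lets me pass to subclusters $V_i'\subseteq V_i$, all of the common size $m'=(1-4\epsilon)m$, such that for every cyclic triangle $V_aV_bV_c$ of $R'$ the three bipartite graphs $(V_a',V_b')$, $(V_b',V_c')$, $(V_c',V_a')$ are $(\sqrt{\epsilon},d-8\epsilon)$-super-regular and every edge between two of these subclusters runs in the cyclic direction $a\to b\to c\to a$.

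For each cyclic triangle $V_aV_bV_c$ I then apply the Blow-up Lemma (Lemma \ref{blowup}) with $F=K_3$, $\Delta=4$, density $d-8\epsilon$ and $\eta=\sqrt{\epsilon}$. The key observation is that a spanning reverse square path $v_1v_2\cdots v_{3m'}$ of $D[V_a'\cup V_b'\cup V_c']$, with $v_i$ assigned to $V_a'$, $V_b'$ or $V_c'$ according to the residue of $i$ modulo $3$, has underlying graph of maximum degree $4$, uses exactly $m'$ vertices of each subcluster, and --- crucially --- has all of its arcs, both the forward arcs $v_iv_{i+1}$ and the backward arcs $v_{i+2}v_i$, running between consecutive subclusters in the cyclic direction. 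Hence embedding the underlying \emph{undirected} path via Lemma \ref{blowup} automatically yields a genuine reverse square path in $D$, since every inter-subcluster edge of $G^*$ has already been made to point the right way. This produces one spanning reverse square path per cyclic triangle. Counting: the number of paths is at most $\lfloor k/3\rfloor\le M/3$, a constant, hence $\le\gamma^{10}n$ once $n_0$ is large; and the only uncovered vertices are those of $V_0$, those in the at most five missed clusters, and the $4\epsilon m$ vertices removed from each cluster by Lemma \ref{superregular}, a total of at most $\epsilon n+5n/M'+4\epsilon n<\gamma^7 n/2$ (shrinking $\epsilon$ in terms of $\gamma$ if needed).

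The point requiring the most care --- the ``hard part'' --- is the interface between the Blow-up Lemma, stated for undirected graphs, and the directed object being embedded: one must (i) single out the directed cyclic triangle as the correct skeleton in $R$ (a transitive triangle would not support a reverse square path), (ii) check that the forward \emph{and} backward arcs of a reverse square path wind the same way around that triangle, so that, after deleting the wrong-way inter-cluster edges, no further orientation bookkeeping is needed, and (iii) keep the subclusters exactly balanced so that a genuinely spanning (not merely almost spanning) reverse square path exists in each blow-up. The remaining inputs --- the near-perfect cyclic triangle packing of $R$ and the degree transfer to $R$ --- come directly from Corollary \ref{triangle} and \eqref{degree-R}.
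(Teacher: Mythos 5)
Your proposal is correct and follows essentially the same route as the paper: apply the Diregularity Lemma, extract a near-perfect cyclic-triangle packing of the reduced digraph via Corollary \ref{triangle}, pass to super-regular oriented subclusters via Lemma \ref{superregular}, and invoke the Blow-up Lemma to embed a spanning reverse square path in each blown-up triangle, whose forward and backward arcs all wind cyclically. The only cosmetic differences are that the paper applies the Blow-up Lemma once to the disjoint union of all triangles (rather than per triangle as you do) and leaves the orientation bookkeeping implicit, whereas your per-triangle application with the explicit orientation check is a slightly cleaner rendering of the identical argument.
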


\begin{proof}
Define the number $M^\prime$ and additional constants such that
$$1/n_0 \ll  1/M^\prime \ll \epsilon \ll d \ll \gamma^7/2 \ll 1.$$
Suppose that $D$ is a digraph on $n \geq n_0$ vertices with minimum semi-degree $\delta^0(D) \geq (2/3+\gamma)n$. We apply the Diregularity lemma (Lemma \ref{regular}) to $D$ with parameters $\epsilon^2, d+8\epsilon^2$ and $ M^\prime$ to obtain a partition $V_0, V_1,\ldots,V_k$ of $V(D)$ and a pure digraph $D^\prime$ with $M^\prime \leq k \leq M(\epsilon,M^\prime)$, $|V_1|=\cdots=|V_k|=m$ and $|V_0| \leq \epsilon^2 n$. Here we assume that $n_0 >M/\gamma^{10}$. Let $R$ be the reduced digraph with parameters $(\epsilon^2, d+8\epsilon^2)$. By inequality (\ref{degree-R}), we get that
$$\delta^0(R) \geq (2/3 +\gamma - d-8\epsilon^2 - 2\epsilon^2)|R| > (2/3 +\gamma/2)|R|.$$
By Corollary \ref{triangle}, there exist at least $\lfloor |R|/3 \rfloor-1$ cyclic triangles. Let $T$ be the union of these triangles. Obviously, $T$ is an oriented subgraph of $R$ with $\Delta(T) \leq 2$. Set $t=|T|=3(\lfloor |R|/3 \rfloor-1)$.

Next, we obtain the spanning oriented subgraph $D^\ast$ corresponding to $T$ from the pure digraph $D^\prime$ by deleting all those vertices that lie in clusters not belonging to $T$ as well as deleting all the $V_i-V_j$ arcs for all pairs $i,j$ with $ij \in A(R)\setminus A(T)$. Note that we add at most $5n/M^\prime$ vertices to the exceptional set $V_0$.

Applying Lemma \ref{superregular} with $G=D$, $G^\prime=D^\prime$, $ G^\ast =D^\ast$ and $R^\prime =T$, we could obtain an oriented subgraph $D^{\ast\ast}$ of $D^\ast$ such that each arc of $T$ corresponds to an $(\epsilon,d)$-super-regular pair of density $d$, by adding at most $4\epsilon^2n$ further vertices to the exceptional set $V_0$. Note that the new exceptional set now satisfies $|V_0| \leq \epsilon^2 n + 5n/M^\prime +4\epsilon^2n < \epsilon n$. Let $H^\prime$ be the union of $t/3$ disjoint  paths of length $3(1-8\epsilon) m$ and let $H$ be the underlying graph of $H^\prime$.

Consider the underlying graph $T^\prime$ of $T$ and fix additional constants $l_1=\cdots=l_t=m-8\epsilon m$.
We apply Lemma \ref{blowup} with $F=T^\prime$ and the underlying graph of $D^{\ast\ast}$. Note that $H$ is a subgraph of $F^\prime$ which described in Lemma \ref{blowup}. This shows that the underlying graph of $D^{\ast\ast}$ contains a copy of $H$. It is easy to check that $D^{\ast\ast}$ contains a copy of  $H^\prime$. Thus $D$ contains $t/3$ disjoint  paths of length $3(1-8\epsilon) m$ covering all but at most $\epsilon n$ vertices. Note that $\epsilon \ll \gamma^7/2$ and $t/3 \leq (M-5)/3$. Hence $D$ contains a family of at most $\gamma^{10}n$ vertex-disjoint paths, covering all but at most $\gamma^7n/2$ vertices.
\end{proof}
\subsection{Completion of Theorem \ref{main}}
Suppose that $\gamma < 1/400 $ is any positive real number, and let $P_A$ be an
absorbing $l$-path in $D$ with $l = |V(P_A)| \leq 20\gamma^3n$ that is obtained by Lemma \ref{absorbing}. Applying Lemma \ref{reservior} to $D$ with $W = V(P_A)$, we get that a reservoir set $R \subset V(D-P_A)$ of cardinality $\gamma^7n/2$ with the property described in Lemma \ref{reservior}. For convenience, let $D^\prime = D[V \ (V(P_A) \cup R)]$ and $n^\prime=|D^\prime|$. Clearly, $\delta^0(D^\prime) \geq (2/3+\gamma_1)n^\prime$, where $ n \geq n^\prime \geq n-20\gamma^3n-\gamma^7n/2$ and
$\gamma >\gamma_1>\gamma -20\gamma^3-\gamma^7/2.$

\smallskip

Furthermore, using Lemma \ref{pathcover} to $D^\prime$, we can obtain a family of at most $\gamma_1^{10}n^\prime$ disjoint  paths, say $P_1,\ldots,P_p$, covering all but at most $\gamma_1^7n^\prime/2$ vertices. Let $T$ be the set of uncovered vertices. So $|T| \leq \gamma_1^7n^\prime/2 \leq \gamma^7n/2$.

\smallskip

Next our goal is to show that there exists a  cycle $C$ in $D$ such that $P_A$ is contained in $C$, $|V(D-C)| \leq \gamma^7n$ and $|V(C)\cap R| \leq \gamma^8n$. This will prove the theorem. Indeed, for the vertex subset $U=T \cup (R \setminus V(C))$, we have $U \subset V(D-P_A)$ and $|U|=|T|+|R\setminus V(C)| \leq \gamma^7n/2+\gamma^7n/2 =\gamma^7n/2$. By Lemma \ref{absorbing}, there is a  path $P_{AU}$ in $D$ with $V(P_{AU}) = V(P_A) \cup U$ and $P_{AU}$ has the same end-arcs as $P_A$. Clearly, $U=D-C$. Hence, we could absorb all vertices outside $C$, and then obtain the reverse square of a Hamiltonian cycle.

\smallskip

To construct the  cycle $C$, we connect all  paths $P_1,\ldots, P_p$ as well as the  path $P_A$ by successively applying
Lemma \ref{Rconnect}. Notice that new vertices of the connecting paths will be entirely contained in the set $R$. In what follows, we will show the procedure of connecting together all paths into one  cycle $C$.

Set $P_{p+1}=P_A$. We show that for each $i\in[p+1]$, there exists a  path $L_i$ in $D$ of the form $L_1 =P_1$ and for $i \geq 2$,
$$L_i = P_1 \circ Q_1 \circ \cdots \circ P_{i-1} \circ Q_{i-1} \circ P_i$$
where each of the  paths $Q_1,\ldots,Q_{i-1}$ has at most $16/\gamma$ vertices and belongs to $R$. There is nothing to prove for $i = 1$. Assume the statement is true for some $1 \leq i \leq p$. Let $ab$ be the last end-arc of $L_i$ and let $cd$ be the first end-arc of $P_{i+1}$. Note that
$$\bigcup_{j=1}^{i-1} |V(Q_j)| <\gamma(i-1)/16< 16\gamma^9n < \gamma^8n.$$
Applying Lemma \ref{Rconnect} with $ab$ and $cd$, we get a  path $Q_i$ of order at most $16/\gamma$, with $ab$ is the first end-arc and $cd$ is the last end-arc, such that $Q_i\setminus \{a,b,c,d\} \subset R$ and $Q_i \cap L_i = \{a,b,c,d\}$. Then we obtain a path of the form
$L_{i+1} = L_i \circ Q_{i} \circ P_{i+1}.$
Thus we could obtain path $L_{p+1}$ which contains an absorbing  path $P_A$ and satisfies $|V(L_{p+1}) \cap R | < 16\gamma^9n < \gamma^8n $.

\smallskip

To obtain the  cycle $C$, let $ab$ be the last end-arc of $L_{p+1}$ and $cd$ be the first end-arc of $L_{p+1}$. Applying Lemma \ref{Rconnect} with $ab$ and $cd$ again, we obtain a  path $Q$ of order at most $16/\gamma$ connecting $ab$ and $cd$ such that $Q\setminus \{a,b,c,d\} \subset R$. Thus we get desired  cycle
$$C=P_1 \circ Q_1 \circ \cdots \circ P_{p+1} \circ Q \circ P_1$$
that contains $P_A$ as a segment and satisfies $|V(C)\cap R| \leq \gamma^8n$.
Recall that $|R| = \gamma^7n/2$ and $|T| \leq \gamma^7n/2$. There are at most $\gamma^7n$ vertices left outside $C$. In fact, we have
$$|V(D-C)| = |T \cup R \ V(C)| \leq |T| + |R| \leq \frac{1}{2}\gamma^7n+\frac{1}{2}\gamma^7n = \gamma^7n.$$
Thus this completes the proof of Theorem \ref{main}.

\section{Concluding remarks}
To conclude the papaer, we pose the following problem.
\begin{problem}
Does every digraph on $n$ vertices with $\delta^0(D)\geq 2n/3$ contain the (reverse) square of Hamiltonian cycles?
\end{problem}
Also, the cycle factor is also a natural extension of the Hamiltonian cycle. We conjecture that for any digraph $D$ the lower bound $2|V(D)|/3$ on the semi-degree of $D$ can guarantee all possible cycle factors. This is asymptotically correct for the cases $n_i=3$  ($i\in[k]$) and $n_i=4$ ($i\in[k]$) due to a result of Czygrinow et. al \cite{Czygrinow}.
\begin{conjecture}\label{conj2}
Every digraph of order $n$ with $\delta^0(D)\geq2n/3$ contains all possible cycle factors. That is, for any positive integer partition  $n=n_1+\cdots+n_k$ with $n_i\geq 3$ for each $i$, digraph $D$ has $k$ disjoint cycles of length $n_1,\ldots,n_k$, respectively.\end{conjecture}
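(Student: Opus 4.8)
The plan is to attack Conjecture~\ref{conj2} by the absorption-plus-regularity method of this paper, reinforced with an extremal/non-extremal dichotomy, because the hypothesis $\delta^0(D)\ge 2n/3$ sits exactly at the conjectured threshold and hence leaves no slack for the $o(n)$ errors of the Diregularity lemma. Fix an admissible partition $n=n_1+\cdots+n_k$. The first step is a reduction: separate the parts into \emph{bounded} ones ($n_i\le C$ for a suitable constant $C$) and \emph{long} ones. The bounded cycles must be realized on the nose, so they will be taken from a pool of short directed cycles found directly in $D$; when almost all $n_i$ equal $3$ this pool is essentially a cyclic-triangle factor and the problem specializes to the theorem of Czygrinow, Kierstead and Molla~\cite{Czygrinow}. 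The long cycles, by contrast, are flexible: each can swallow $O(1)$ extra vertices and change length across every residue modulo $3$, so they will absorb all rounding errors. After this reduction it suffices, modulo the absorption of an $o(n)$ leftover, to tile almost all of $D$ by cyclic triangles — available through Corollary~\ref{triangle} and its reduced-graph analogue — and then to assemble these tiles, together with the reserved short cycles, into exactly $k$ directed cycles of the prescribed lengths.

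In the \emph{non-extremal} case, assume $D$ is $\gamma$-far in edit distance from the finitely many digraphs witnessing tightness of the bound $2n/3$ for cyclic-triangle factors. Stability then upgrades the reduced digraph to $\delta^0(R)\ge(2/3+\gamma')|R|$ for some $\gamma'=\gamma'(\gamma)>0$, so we are back in the regime of Lemmas~\ref{connnect}, \ref{absorbing}, \ref{reservior}, \ref{Rconnect} and~\ref{pathcover} (in their plain directed-cycle forms, which are if anything easier, there being no reverse-square constraint). One then runs the scheme used to prove Theorem~\ref{main}: reserve a reservoir and, for each target $n_i$, a flexible absorbing gadget engineered exactly as above; use a reduced-graph template — an integer assignment of clusters to cycles respecting the edge structure and cluster capacities of $R$, which exists because $\delta^0(R)$ is large (this is the reduced triangle-tiling of Corollary~\ref{triangle} refined to honour the prescribed lengths) — together with the Blow-up Lemma to route $k$ long arcs and a triangle tiling covering all but an $o(n)$ set; connect, inside the reservoir, the arcs and triangles allotted to $n_i$ into a single cycle $C_i$; and finally absorb every still-uncovered vertex into the gadgets, distributing them so that each $C_i$ ends with exactly $n_i$ vertices. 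The only genuinely new ingredient over Theorem~\ref{main} is this length accounting, a finite divisibility-plus-greedy matter once the gadgets are sufficiently flexible.

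The \emph{extremal} case is where I expect the real difficulty, and it requires a self-contained argument at the exact value $2n/3$. There $D$ lies within $\gamma n$ edits of an unbalanced blow-up with a class $A$ of size about $n/3$ that every triangle of every factor is forced to meet; one must analyse the cross structure between $A$ and $V(D)\setminus A$ directly, show that $\delta^0(D)\ge 2n/3$ pins $|A|$ and the cross-degrees down to \emph{exactly} what is needed to admit a perfect system of triangles — via a defect form of Hall's theorem, or an Aharoni--Haxell style hypergraph matching — and then note that the prescribed lengths cost nothing once the rigid cross-edges are fixed, since there is complete freedom along $V(D)\setminus A$. Making this robust enough to handle \emph{every} near-extremal $D$ and \emph{every} admissible length vector simultaneously — in particular the corner cases $k=1$ (plain Hamiltonicity), all $n_i=3$, and one giant $n_1$ alongside many triangles — is the main obstacle, and is why the conjecture is still open; I would therefore first settle the statement under the stronger hypothesis $\delta^0(D)\ge(2/3+\gamma)n$, where the extremal case vanishes and the non-extremal plan above applies verbatim.
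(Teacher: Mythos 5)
The statement you are attacking is Conjecture~\ref{conj2}, which the paper explicitly leaves open; there is no proof in the paper to compare against (the paper only supplies the tightness example with $|X|=2k-1$, $|Y|=k+1$ showing $2n/3$ cannot be lowered). So the relevant question is whether your proposal constitutes a proof, and by your own account it does not: you concede at the end that the extremal case at the exact threshold $2n/3$ is unresolved and that you would instead retreat to the weaker hypothesis $\delta^0(D)\ge(2/3+\gamma)n$. That is a genuine gap, not a stylistic one. At $\delta^0(D)\ge 2n/3$ there is zero slack, so every $o(n)$-sized loss in Lemmas~\ref{regular}, \ref{superregular}, \ref{pathcover} and the absorbing/reservoir machinery pushes the effective semi-degree of the working digraph strictly below the threshold, and the entire non-extremal pipeline you describe (which is the pipeline of Theorem~\ref{main}) simply does not run. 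A stability dichotomy is the right instinct, but you have not exhibited the stability theorem you invoke (``finitely many digraphs witnessing tightness'') nor the defect-Hall/Aharoni--Haxell argument that would settle the extremal side, and these are exactly the missing ingredients.

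There are also unaddressed gaps inside the non-extremal half. First, ``length accounting'': you assert that making the $k$ cycles have \emph{exactly} lengths $n_1,\dots,n_k$ is a ``finite divisibility-plus-greedy matter,'' but the paper's absorbers are built for a single global cycle and add vertices in increments of one to one path; distributing an $o(n)$ leftover across $k$ cycles (with $k$ possibly $\Theta(n)$, all $n_i=3$) so that each ends with its prescribed length is a constrained allocation problem you do not solve. Second, the directed-cycle tiling you want from the reduced digraph is not literally Corollary~\ref{triangle}: the corollary guarantees $\lfloor |R|/3\rfloor-1$ cyclic triangles plus one transitive triangle, and a transitive triangle is not a directed $3$-cycle, so when many $n_i=3$ you need a genuinely cyclic tiling in $R$, which requires a separate argument. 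Third, the case $k=1$ (a directed Hamilton cycle) needs $\delta^0\ge n/2$ by Ghouila-Houri, which is weaker than $2n/3$, so that corner is fine, but it illustrates that the hard regimes (many short cycles, one long plus many triangles) each need bespoke handling your sketch only names. In short: the plan is sensible and aligned with the techniques of Theorem~\ref{main}, but it is a research programme, not a proof, and the conjecture remains open exactly where you say it does.
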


The following example shows that the minimum semi-degree condition in Conjecture \ref{conj2} would be best possible if true. Let $X$ and $Y$ be two disjoint vertex sets of size $2k-1$ and $k+1$, respectively. Suppose $D$ is a digraph with vertex set $X\cup Y$ and arc set $\{xy: x\in X, y\in X\cup Y\}\cup \{yx: x\in X, y\in Y\}$.  Clearly, $\delta^0(D)=2k-1=2n/3-1$, but it contains no $k$ disjoint 3-cycles.

\nocite{*}
\bibliographystyle{abbrvnat}
\bibliography{cite}

\end{document}